\def\cal{\mathcal}
\newcommand{\EE}{{\mathbb E}}
\newcommand{\PP}{{\mathbb P}}
\newcommand{\RR}{{\mathbb R}}
\newcommand{\F}{{\cal F}}
\newcommand{\T}{{\cal T}}
\newcommand{\1}{{\bf 1}}
\newcommand{\vP}{\overleftarrow{P}}
\newcommand{\vX}{\overleftarrow{X}}
\newcommand{\hP}{\widehat P}
\newcommand{\Id}{\hbox{Id}}
\newcommand{\ac}{\alpha}
\newcommand{\bc}{\beta}
\newcommand{\cc}{\gamma}
\newcommand{\hX}{\widehat X}
\newcommand{\tX}{\widetilde X}
\newcommand{\tP}{\widetilde P}
\newcommand{\pa}{\partial}
\newcommand{\hI}{\widehat I}
\newcommand{\tI}{\widetilde I}
\newcommand{\hT}{\widehat{\cal T}}
\newcommand{\tT}{\widetilde{\cal T}}
\newcommand{\phT}{\widehat{T}}
\newcommand{\ptT}{\widetilde{T}}
\newcommand{\tpa}{\widetilde{\partial}}
\newcommand{\ovP}{\overline P}
\newcommand{\ovX}{\overline X}
\begin{document}
\title[duality and intertwining]
{Duality and Intertwining for discrete Markov kernels: a relation and examples}
\author{Thierry Huillet$^{1}$, Servet Martinez$^{2}$}
\address{$^{1}$Laboratoire de Physique Th\'{e}orique et Mod\'{e}lisation \\
CNRS-UMR 8089 et Universit\'{e} de Cergy-Pontoise, 2 Avenue Adolphe Chauvin,
95302, Cergy-Pontoise, FRANCE\\
$^{2}$ Departamento de Ingenier\'{i}a Matem\'{a}tica \\
Centro Modelamiento Matem\'{a}tico\\ UMI 2807, UCHILE-CNRS \\
Casilla 170-3 Correo 3, Santiago, CHILE.\\
E-mail: Thierry.Huillet@u-cergy.fr and smartine@dim.uchile.cl}
\maketitle

\begin{abstract}
We work out some relations between duality and intertwining 
in the context of discrete Markov chains, fixing up the background of  
previous relations first established for birth and death chains and their 
Siegmund duals. In view of the results, the monotone properties resulting 
from the Siegmund dual of birth and death chains are revisited in some detail, 
with emphasis on the non neutral Moran model. We also introduce an 
ultrametric type dual extending the Siegmund kernel.  
Finally we discuss the sharp dual, following closely the 
Diaconis-Fill study.
\end{abstract}

\smallskip

\noindent{\bf{Running title}}: Duality and Intertwining.\newline

\smallskip

\noindent{\bf{Keywords}}: {\it Duality; Intertwining; Siegmund dual; 
generalized ultrametric matrices; birth and death chains; Moran 
models; sharp strong stationary time; sharp dual.}

\smallskip

\noindent{\bf{MSC 2000 Mathematics Subject Classification}}: 60 J 10, 
60 J 70, 60 J 80.  

\bigskip

\section{Introduction}
Our work is devoted to the study of duality and intertwining relations between 
Markov chain kernels. Even if these concepts can be established only as 
relations between matrices, as we define them in the next section, our 
study is on its probabilistic consequences. For this purpose we need that
the matrices are non negative and substochastic to be able to define a 
dual Markov chain. The fact that the intertwining kernel is stochastic
allows a rich probabilistic interpretation that has been given in 
\cite{cpy}, \cite{DF}, \cite{F} and \cite{F2}.

\bigskip

A main problem is the existence of a duality 
relationship between substochastic 
kernels. Indeed, once this fact is established, then several relations
can be deduced when the starting chain is irreducible and 
positive recurrent. This is the statement of one of our main result, 
which is Theorem \ref{theo1}. The hypotheses of this theorem rely on 
a duality relationship between kernels. 

\bigskip

In the following sections, we find additional examples where these 
duality relations between substochastic kernels can be established:
for the well-known Siegmund kernel the hypotheses of Theorem \ref{theo1}
are verified for monotone chains, see Corollary \ref{coro21}; 
for a generalized ultrametric potential kernel some conditions  
for the existence of the dual are given in Proposition \ref{prop7};  
for birth and death (BD) chains the properties derived from 
monotonicity are summarized in Corollary \ref{coro31}; and  
in Proposition \ref{propo6} we show that 
the non-neutral Moran model is monotone when its bias 
mechanism is nondecreasing. 

\bigskip

For birth and death chains, we 
revisit the properties relating non negative spectrum 
and monotonicity (see Proposition \ref{propo5}) and
for the Moran model we identify some cases with 
non negative spectra and also when stronger properties are 
satisfied.

\bigskip

The section \ref{sec5} follows closely the ideas on sharp stationary 
times 
and duals developed in  \cite{AD}, \cite{DF} and \cite{F}. 
In Proposition \ref{prop9} we show a sharpness result alluded
to in Remark $2.39$ of \cite{DF} and in Theorem 2.1 in \cite{F}.
One of its corollaries is Proposition \ref{prop9'}
where the condition for sharpness is written in terms of 
the dual function. This applies to the intertwining 
of a monotone chain under the Siegmund dual, in this case both chains
can start from the state $0$. In the BD case we also study 
some quantitative aspects of the absorption time.

\bigskip

We point out that even if duality and intertwining can be set 
for Markov chains acting on general state spaces 
and/or with continuous time,
we restrict ourselves to the discrete time and space in order to be 
able to present quickly our main results and avoid to introduce 
additional overburden notations.

\section{ Duality and Intertwining }
\label{s1}

\subsection{Notation}
Let $I$ be a countable set. 
By $\F(I)$ we denote the set of real functions, and by
$\F_b(I)$ and $\F_+(I)$ we denote respectively its bounded
and positive elements. Since $I$ is countable the set $\F(I)$ 
is identified with the set of vectors $\RR^I$. 
Let $\pa$ be a point that does not
belong to $I$, and denote $I^\pa:=I\cup \{\pa\}$. Every
$f\in \F(I)$ is extended canonically to a function $f^\pa$ that 
satisfies $f^\pa(\pa)=0$. 

\bigskip

If $A$ is any set we denote by $\1_A$ or  $\1(A)$ its characteristic 
function. We denote by $\1$ the unit function defined on $I$ (or in 
other sets $\hI$ and $\tI$ that we introduce further).

\bigskip

A non negative matrix $P=(P(x,y): x,y\in I)$ is called a kernel on 
$I$. (Sometimes we will emphasize the non negativity by saying
a non negative kernel.) It obviously acts on the set $\F_+(I)$. 
A substochastic kernel is such that $P \1\le \1$, it is stochastic
when the equality $P \1 = \1$ holds, and strictly substochastic if
it is substochastic and there exists some $x\in I$ such that
$P \1(x)<1$. When $P$ is substochastic, it obviously acts on  $\F_b(I)$.

\medskip

The kernel $P$ is irreducible when for any pair $x,y\in I$
there exists $n>0$ such that $P^{(n)}(x,y)>0$. 

\medskip

A point $x_0\in I$ is an absorbing point of the kernel $P$
when $P(x_0,y)=\delta_{y,x_0}$ for all $y\in I$. 

\medskip

When $P$ is a substochastic kernel there exists a uniquely defined 
(in distribution) Markov chain $X=(X_n: n< \T^X)$
taking values in the countable set $I$, with
lifetime $\T^X$ and with transition kernel $P$. We have
the equality $P=P_X$
where $P_X$ is the kernel acting on 
the set of functions $\F_b(I)$ (or $\F_+(I)$) by 
$$
P_X f(x)=\EE(f(X_1) \cdot \1(\T^X>1))\,,\; x\in I\,.
$$
$P$ generates the semigroup $(P^n: n\ge 1)$, each 
matrix $P^n$ acting on $\F_b(I)$ or 
$\F_+(I)$, and it verifies  
$$
P^n f(x)=\EE(f(X_n) \cdot \1(\T^X>n))\,, \;\; x\in I\,, \, n\ge 1\,. 
$$

The lifetime $\T^X$ is such that

\begin{itemize}
\item If $P$ is stochastic then
$\T^X=+\infty$ $\PP_x-$a.e. for all $x\in I$;

\item If $P$ is strictly substochastic then there exists some $x\in I$ 
such that $\PP_x(\T^X<+\infty)>0$. When $P$ is irreducible
strictly substochastic then for all $x\in I$ it holds 
$\PP_x(\T^X<+\infty)=1$.
\end{itemize}

The kernels will be denoted by $P$, $\hP$, $\tP$, they will be 
defined on the countable sets $I$, $\hI$, $\tI$ 
respectively. When these kernels are substochastic the associated 
Markov chains will be respectively denoted by $X$, $\hX$, 
$\tX$, and the lifetimes of these chains will be respectively $\T$, 
$\hT$, $\tT$. 

\subsection{ Strictly substochastic kernel}
If $P$ is strictly substochastic we can add a new state
$\pa$ to $I$, and $X$ is extended 
to the Markov chain $X^\pa=(X^\pa_t: t\ge 0)$ by
$$
X^\pa_t=X_t\,,\; t<\T\,; \;\;\; X^\pa_t=\pa \,,\; t\ge \T\,,
$$
so $\pa$ is an absorbing state of the new chain. The transition kernel 
$P_{X^\pa}$ of $X^\pa$ is stochastic and it is given by
$$
P_{X^\pa} g (x) = \EE_x(g(X^\pa_1) \cdot \1(T^{X^\pa}_{\pa}>1))
+g(\pa) \PP_x(T^{X^\pa}_{\pa}\le 1)\,,
$$
for all $g\in \F_b(I^\pa)$ or $g\in \F_+(I^\pa)$.
Then,
\begin{equation}
\label{equatn1}
\left[g(\pa)=0 \right]\; \Rightarrow \; 
\left[\left(P^n_{X^\pa}\, g\right)\big|_{I}
= P^n \left( g\big|_{I}\right) \,, \;\; \forall\,  n\ge 1\right].
\end{equation}
Therefore, since the canonical extension of $f\in \F(I)$ to
$f^\pa\in \F(I^{\pa})$ satisfies $f^\pa (\pa)=0$,
the right hand side of (\ref{equatn1}) is verified
for $g=f^\pa$.

\bigskip

We recall that $h\in \F_b(I)$ (or $h\in \F_+(I)$) is a
$P-$harmonic
function if $P h=h$, or equivalently if it verifies
$$
\EE_x(h(X_n)\cdot \1(\T > n))=h(x) \,,\;\; \forall x\in I\,,
\forall n\ge 1\,.
$$
We have that its extension $h^\pa\in \F_b(I^{\pa})$ (or $h^\pa\in
\F_+(I^{\pa})$) such that $h^\pa(\pa)=0$ is a
$P_{X^\pa}-$harmonic function.

\bigskip

Let us denote by
$$
T^X_J=\inf\{n\ge 0: X_n \in J\}
$$
the hitting time of $J\subseteq I$ of the chain $X$, 
where as usual we put $+\infty=\inf \; \emptyset$. 
When $J=\{a\}$ is a singleton we put $T^X_a$ 
instead of $T^X_{\{a\}}$. Observe that with this notation we have
$$
\T^X=T^{X^{\pa}}_{\pa}.
$$

To simplify the notation, for the Markov chains $X$, $\hX$, $\tX$,
the hitting times are
denoted respectively by $T_J=T^X_J$, ${\phT}_J=T^{\hX}_J$,
${\ptT}_J=T^{\tX}_J$
(when $J$ is a subset of $I$, $\hI$, $\tI$, respectively).

\bigskip

Let us recall the structure of a non irreducible substochastic 
kernel $P$. In this case, up to permutation, 
we can partition 
$$
I=\bigcup_{l=1}^\ell I_l
$$
in such a way that (see \cite{HJ}, Section 8.3): 
$$
P_{I_l\times I_l} \hbox{ is irreducible } \; \forall
l\in \{1,\cdots, \ell\}\,,
$$
and 
$$
\forall \; x\in I_l\,, \, y\in I_{l'}\,: \;\, P(x,y)>0 \;
\Rightarrow \; l\le l'\,.
$$
If $P$ is stochastic then the last of these submatrices 
$P_{I_\ell\times I_\ell}$ is stochastic,
that is $P_{I_\ell\times I_\ell} \1_{I_\ell}=\1_{I_\ell}$
and there could be other stochastic submatrices. If $P$ is strictly 
substochastic then none or some of these submatrices
$P_{I_l\times I_l}$, $l=1,\cdots, \ell$, could be stochastic. We put
$$
St(P)=\{I_l: P_{I_l\times I_l}
\hbox{ is stochastic}, \, l\in \{1, \cdots ,\ell\}\}\,.
$$
Then, when $P$ is stochastic $St(P)\neq \emptyset$, and if
$P$ is strictly substochastic then $St(P)$ could be 
empty or not. When  $St(P)\neq \emptyset$ then it could 
contain a unique class or not, and also by a simple permutation 
we can always assume that it contains $I_{\ell}$ (this permutation
is not needed when $P$ is stochastic).

\subsection{ Definitions }
We recall the duality and the intertwining relations. As usual 
$M'$ denotes the transposed of matrix $M$, that is $M'(x,y)=M(y,x)$
for all $x,y\in I$.  

\medskip

\begin{definition}
\label{def1}
Let $P$ and  $\hP$ be two kernels defined on the countable sets 
$I$ and  $\hI$, and let $H=(H(x,y): x\in I, y\in \hI)$ be a 
non negative matrix. Then $\hP$ is said to be a $H-$dual of $P$ if
it is verifies
\begin{equation}
\label{equatn2}
H \hP'=P H\,.
\end{equation}
We call $H$ a dual function between $(P,\hP)$. $\Box$
\end{definition}

Note that for a kernel $P$ the $H-$dual $\hP$ exists when 
(\ref{equatn2}) holds and $\hP\ge 0$.

\medskip

When $|I|=|\hI|$ is finite and $H$ is nonsingular we get that 
$$
\hP'=H^{-1}P H\,,
$$
and so $\hP'$ and $P$ are similar matrices and have the 
same spectrum.

\bigskip

Duality is a symmetric notion between kernels, because if $\hP$ is a 
$H-$dual of $P$, then $P$ is a $H'-$dual of $\hP$.

\medskip

We will assume that the non negative dual matrix $H$ is nontrivial,
in the sense that no row and no column vanishes completely.
On the other hand note that if $H$ is a dual function between $(P, \hP)$
then for all $c>0$, $cH$ is also a dual function between these
matrices. Then, when it is necessary, we can always multiply all the
coefficients of $H$ by a strictly positive constant.

\bigskip

This notion of duality (\ref{equatn2}) coincides with the one between 
Markov processes
that can be found in references \cite{Lig}, \cite{MM} and
\cite{DF} among others. Indeed, let $P$ and $\hP$ be substochastic 
and let $X$ and $\hX$ be Markov chains 
with kernels $P$ and $\hP$ respectively. Then, if $\hP$ is 
a $H-$dual of $P$, we have that $\hX$ is a $H-$dual of $X$,
which means that
\begin{equation}
\label{equatn3}
\forall \, x\in I, \, y\in  \hI,\, \forall \, n\ge 0\, : 
\;\;\;
\EE_x(H(X_{n}, y))=\EE_y(H(x, \hX_{n}))\,,
\end{equation}
where we have extended $H$ 
to $(I\cup \{\pa\})\times (\hI\cup \{\pa\})$ by putting
$H(x, \pa)=H(\pa,y)=H(\pa,\pa)=0$, for all $x\in I$,
$y\in \hI$.

\bigskip

Let us now introduce intertwining.

\bigskip

\begin{definition}
\label{def2}
Let $P$ and  $\tP$ be two kernels defined on the countable sets
$I$ and  $\tI$ and let $\Lambda=(\Lambda(y,x): y\in \tI, x\in I)$ be 
a stochastic matrix. We say that $\tP$ is a $\Lambda-$intertwining 
of $P$, if it verifies
\begin{equation*}
\tP \Lambda= \Lambda P \,.
\end{equation*}
$\Lambda$ is called a link between $(P,\tP)$. $\Box$
\end{definition}

When $|I|=|\tI|$ is finite and $\Lambda$ is nonsingular we get 
$$ 
\tP= \Lambda P \Lambda^{-1}\,.
$$
and so $P$ and $\tP$ are similar and have the same spectrum.

\bigskip

Let $P$ and $\tP$ be substochastic and denote by 
$X$ and $\tX$ the associated Markov chains, if $\tP$ is a $\Lambda-$ 
intertwining of $P$ we say that $\tX$ is a $\Lambda-$intertwining of 
$X$. Obviously the
intertwining is not a symmetric relation because $\Lambda'$
is not necessarily stochastic. But when $\Lambda$ is doubly stochastic  
we have that $\tP$ is a $\Lambda-$intertwining of $P$ implies that
$P$ is a $\Lambda'-$intertwining of $\tP$.

\bigskip

The stochastic intertwining
between Markov chains has been deeply studied in   
\cite{cpy}, \cite{DF}, \cite{F} and \cite{F2}.

\section{ Relations between Duality and Intertwining }
\label{s2}

Let us introduce additional notation:
\begin{itemize}

\item By $\mathbf{e}_{a}$ we denote a column vector with $0$
entries except for its $a-$th entry which is $1$;

\item When $P$ is an irreducible positive recurrent stochastic 
kernel, we denote by $\pi=(\pi(x): x\in I)$ its stationary distribution
and we write it as a column vector. So $\pi'P=\pi'$, 
where $\pi'$ is the row vector transposed of $\pi$.
\end{itemize}

Now we give a result on intertwining that will be often used.

\begin{proposition}
\label{prop0}
Let $P$ be an irreducible positive recurrent
stochastic kernel and $\pi$ be its stationary
distribution. Assume $\tP$     
is a kernel that is a $\Lambda-$ intertwining of $P$,  
$\tP \Lambda= \Lambda P$. If $\widetilde a$ is an absorbing state in 
$\tP$ then,
\begin{equation}
\label{equatn6''}
\pi'=\mathbf{e}_{\widetilde a}'\Lambda.
\end{equation}
\end{proposition}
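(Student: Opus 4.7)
The plan is to extract the identity in three short steps: convert the hypothesis that $\widetilde{a}$ is absorbing into a left-eigenvector statement for $\tP$, push this through the intertwining relation to obtain a left-eigenvector statement for $P$, and then invoke uniqueness of the stationary distribution.

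First I would observe that the condition ``$\widetilde{a}$ is an absorbing state of $\tP$'' means exactly that the $\widetilde{a}$-th row of $\tP$ equals $\mathbf{e}_{\widetilde{a}}'$, that is, $\mathbf{e}_{\widetilde{a}}'\tP=\mathbf{e}_{\widetilde{a}}'$. Multiplying the intertwining identity $\tP\Lambda=\Lambda P$ on the left by $\mathbf{e}_{\widetilde{a}}'$ then yields
\begin{equation*}
\mathbf{e}_{\widetilde{a}}'\Lambda \;=\; \mathbf{e}_{\widetilde{a}}'\tP\Lambda \;=\; \mathbf{e}_{\widetilde{a}}'\Lambda P,
\end{equation*}
so the row vector $v':=\mathbf{e}_{\widetilde{a}}'\Lambda$ satisfies $v'P=v'$.

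Next I would check that $v'$ is a probability vector. Since $\Lambda$ is stochastic by Definition \ref{def2}, we have $\Lambda\1=\1$, and therefore $v'\1=\mathbf{e}_{\widetilde{a}}'\Lambda\1=\mathbf{e}_{\widetilde{a}}'\1=1$; combined with the non-negativity of $\Lambda$, this says $v'$ is a probability distribution on $I$ that is $P$-invariant.

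Finally, since $P$ is irreducible and positive recurrent, its invariant probability distribution is unique, hence $v'=\pi'$, which is exactly (\ref{equatn6''}). There is essentially no obstacle here, the argument is a one-line computation followed by an appeal to uniqueness; the only point worth flagging is that the stochasticity of $\Lambda$ is what promotes the left-fixed vector $v'$ to a genuine probability distribution, so that uniqueness of $\pi$ can be applied.
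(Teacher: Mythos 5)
Your proof is correct. The paper reaches the same conclusion by a slightly different mechanism: it applies the Ces\`aro ergodic theorem, writing $\lim_{k\to\infty}\frac{1}{k}\sum_{n=0}^{k-1}(\Lambda P^n)(\widetilde a,y)=\pi(y)$ and then observing that $(\tP^n\Lambda)(\widetilde a,\cdot)=\Lambda(\widetilde a,\cdot)$ is constant in $n$ because $\widetilde a$ is absorbing, so the limit forces $\Lambda(\widetilde a,\cdot)=\pi$. You instead note in one line that $\mathbf{e}_{\widetilde a}'\Lambda$ is a left-fixed probability vector of $P$ (using $\mathbf{e}_{\widetilde a}'\tP=\mathbf{e}_{\widetilde a}'$, the intertwining identity, and stochasticity of $\Lambda$), and then invoke uniqueness of the invariant probability measure for an irreducible positive recurrent chain. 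Both arguments ultimately rest on the uniqueness of $\pi$; yours isolates that fact explicitly as a linear-algebraic fixed-point statement rather than routing it through time averages, which is a bit more economical. There is no gap in either, but your version makes the dependence on the hypotheses (stochasticity of $\Lambda$ to get a probability vector, irreducibility and positive recurrence to get uniqueness) more transparent.
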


\begin{proof}
Since the chain $P$ is positive recurrent
with stationary distribution $\pi$ and $\Lambda$ is stochastic we get
$\lim\limits_{k\to\infty}\frac{1}{k}\sum_{n=0}^{k-1}
(\Lambda {P}^n)(x,y)=\pi(y)$, in particular
\begin{equation}
\label{equatn61}
\lim\limits_{k\to\infty}\frac{1}{k} \sum_{n=0}^{k-1}
(\Lambda{P}^n)(\widetilde a,y)=\pi(y)\,.
\end{equation}
On the other hand from the assumption we get 
$\tP^n(\widetilde a,y)=\delta_{y,\widetilde a}$
and then
\begin{equation}
\label{equatn62}
({\tP}^n \Lambda)(\widetilde a,y)=\Lambda(\widetilde a,y) 
\;\; \forall n\ge 0\,, y\in I\,.
\end{equation}
From (\ref{equatn5}) we have
${\tP}^n \Lambda= \Lambda {P}^n$ for all $n\ge 1$, and so from
(\ref{equatn61}) and (\ref{equatn62}) we deduce
$\Lambda(\widetilde a,y)=\pi(y)$ which is equivalent to
$(\mathbf{e}_{\widetilde a}'\Lambda)(y)=\pi(y)$.
Then (\ref{equatn6''}) is shown.
\end{proof}    

\bigskip

For a vector $\rho\in \RR^I$ we will denote by
$D_\rho$ the diagonal matrix with terms $(D_\rho)(x,x)=\rho(x)$, 
$x\in I$.

\bigskip

Let $P$ be an irreducible positive recurrent stochastic kernel with
stationary distribution $\pi$. 
By irreducibility we have $\pi>0$.
Denote by $\vP$ the transition kernel of the
reversed chain of $X$, so $\vP(x,y)={\pi(x)}^{-1} P(y,x)\pi(y)$ or 
equivalently
\begin{equation}
\label{equatn4}
\vP'=D_\pi P D_\pi^{-1}.
\end{equation}
We have that $\vP$ is in duality with $P$ via $H=D_\pi^{-1}$.
Note that $\vP$ is also irreducible and positive recurrent with 
stationary distribution $\pi$ and that $P'=D_\pi \vP D_\pi^{-1}$,
so we can exchange the roles of $P$ and $\vP$. 
In the reversible case $\vP=P$, the relation (\ref{equatn4})  
expresses a self duality.

\bigskip

Let us give one of our main results that can be viewed as the 
generalization of Theorem 5.5 in \cite{DF} devoted to birth and 
death chains.

\bigskip

\begin{theorem}
\label{theo1}
Let $P$ be an irreducible positive recurrent
stochastic kernel and let $\pi$ be its stationary
distribution. Assume $\hP$ is a (non negative) kernel and 
that it is a $H-$dual of $P$, $H{\hP}'=P H$, where $H$ 
is nontrivial. Then 

\medskip

\noindent $(i)$ ${\hP}H'D_\pi=H'D_\pi \vP$.

\medskip       

\noindent $(ii)$ The vector $\varphi:= H' \pi$ is strictly positive
and it verifies
$$
{\hP} \varphi=\varphi\,.
$$ 

\noindent $(iii)$ $\tP=D_\varphi^{-1} {\hP} D_\varphi$ is a 
stochastic kernel and it is a
$\Lambda-$intertwining of $\vP$, so $\Lambda$ is a stochastic link $\Lambda$, 
more precisely
\begin{equation}
\label{equatn5}
\tP \Lambda= \Lambda \vP \, \hbox{ with } 
\Lambda:=D_\varphi^{-1} H' D_\pi 
\hbox{ and they verify } \tP \1 = \1 =\Lambda \1 \,.
\end{equation}
Moreover we have the duality relation
$$
K \tP'= P K\, \hbox{ with } K:=H D_\varphi^{-1}\,.
$$

\noindent $(iv)$ Let $I$ and $\hI$ be finite and $\hP$ be substochastic. 
Then:

\medskip       

\noindent $(iv1)$ When $\hP$ is stochastic and irreducible then 
$\varphi=c \1$ for some $c>0$, and $\tP=\hP$.

\medskip       

\noindent $(iv2)$ 
If $\hP$ is strictly substochastic then it is not irreducible.

\noindent $(iv3)$
If  $\hP$ is non irreducible then $St(\hP)\neq \emptyset$ and there 
exist some constants $c_l > 0$ for $\hI_l\in St(\hP)$ such that
\begin{equation}
\label{equatn6}
\varphi(x)= \sum\limits_{{\hI}_l\in St(\hP)} c_l 
\PP_x(\lim\limits_{n\to \infty}\hX_n\in {\hI}_l)=
\sum\limits_{{\hI_l}\in St(\hP)} c_l \PP_x({\phT}_{\hI_l}<\hT)\,.
\end{equation}

\noindent $(iv4)$ If $\hP$ has a unique stochastic class ${\hI}_\ell$,
then,  
\begin{equation}
\label{equatn7'}
\frac{\varphi(x)}{\varphi(y)}=\PP_x( {\phT}_{\hI_\ell}<\hT) \; \; 
\hbox{ for any } y\in {\hI}_\ell\,,
\end{equation}
and the intertwining Markov chain $\tX$ is given by the Doob transform
\begin{equation}
\label{equatn7}
\PP_x(\tX_1=y_1,\cdots, \tX_k=y_k)=
\PP_x(\hX_1=y_1,\cdots, \hX_k=y_k \, | \, {\phT}_{{\hI}_\ell}<\hT)\,.
\end{equation}

\noindent $(v)$ If $\widehat a$ is an absorbing state in $\hP$ then
$\widehat a$ is an absorbing state in $\tP$ and (\ref{equatn6''})
$\pi'=\mathbf{e}_{\widehat a}'\Lambda$ holds. Moreover the sets of 
absorbing points in $\hP$ and $\tP$ coincide.      

\bigskip

\noindent $(vi)$ If $|I|=|\hI|$ is finite and $H$ is nonsingular then: 
$\hP =H'D_\pi \vP D_\pi^{-1}{H'}^{-1}$
and $\tP=\Lambda \vP \Lambda^{-1}$. 
Hence $\hP$, $\vP$, $\tP$, 
are similar matrices and $P$, $\hP$, $\tP$ have the same spectrum. 
\end{theorem}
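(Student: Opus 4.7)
The entire theorem pivots on item (i), so I would establish it first by transposing the hypothesis $H\hP'=PH$ into $\hP H'=H'P'$ and substituting the reversibility identity (\ref{equatn4}) in the form $P'=D_\pi\vP D_\pi^{-1}$; right-multiplication by $D_\pi$ yields (i) immediately. Then (ii) is a one-line consequence: apply both sides of (i) to $\1$, use $D_\pi\1=\pi$ and $\vP\1=\1$, and the identity collapses to $\hP(H'\pi)=H'\pi$, that is $\hP\varphi=\varphi$. Strict positivity of $\varphi(x)=\sum_y H(y,x)\pi(y)$ follows from $\pi>0$ together with the standing nontriviality hypothesis that no column of $H$ is identically zero.

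For (iii) the verifications are mechanical given (i) and (ii). Stochasticity of $\tP=D_\varphi^{-1}\hP D_\varphi$ is $\tP\1=D_\varphi^{-1}\hP\varphi=\1$; the same calculation gives $\Lambda\1=\1$, and $\Lambda\ge 0$ is inherited from the nonnegativity of $H$, $D_\pi$, $D_\varphi^{-1}$. The intertwining $\tP\Lambda=\Lambda\vP$ unpacks into $D_\varphi^{-1}\hP H' D_\pi=D_\varphi^{-1}H'D_\pi\vP$, which is just (i) premultiplied by $D_\varphi^{-1}$. For the dual relation $K\tP'=PK$ with $K=HD_\varphi^{-1}$, I compute $K\tP'=H\hP' D_\varphi^{-1}=PHD_\varphi^{-1}=PK$ using the original hypothesis, noting that $D_\varphi$ is diagonal so self-transposed.

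Part (iv) is where the real probabilistic content lies, and I would treat it case by case. In (iv1), $\varphi$ is a strictly positive $\hP$-harmonic function on a finite irreducible stochastic kernel, hence a positive multiple of $\1$ by Perron-Frobenius, so $\tP=\hP$. Items (iv2) and the nonemptiness part of (iv3) share one contradiction: a finite irreducible strictly substochastic kernel has spectral radius strictly less than $1$ so $\hP^n\to 0$, and more generally $St(\hP)=\emptyset$ would force $\hT<\infty$ almost surely on the finite state space; either conclusion is incompatible with $\hP^n\varphi=\varphi$ and $\varphi>0$. For the representation formula, observe that on each $\hI_l\in St(\hP)$ the restriction of $\hP$ is finite, irreducible and stochastic and no mass leaves the class, so $\varphi\equiv c_l$ on $\hI_l$ by Perron-Frobenius; writing $\varphi(x)=\EE_x[\varphi(\hX_n)\1(\hT>n)]$ and letting $n\to\infty$ (the finite chain either dies or is trapped in a single stochastic class) yields the two expressions in (\ref{equatn6}). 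Item (iv4) is the specialization to a single stochastic class: $\varphi(y)=c_\ell$ for $y\in\hI_\ell$ produces (\ref{equatn7'}), and $\tP(x,y)=\hP(x,y)\varphi(y)/\varphi(x)$ is the classical Doob $h$-transform with $h=\varphi$, whose sample-path law is precisely that of $\hX$ conditioned on $\{\phT_{\hI_\ell}<\hT\}$, giving (\ref{equatn7}).

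Finally, (v) is immediate: $\tP(\widehat a,y)=\hP(\widehat a,y)\varphi(y)/\varphi(\widehat a)=\delta_{y,\widehat a}$, with the converse direction using only $\varphi>0$; (\ref{equatn6''}) then follows by invoking Proposition \ref{prop0} on the pair $(\vP,\tP)$, since $\vP$ is irreducible positive recurrent with the same stationary distribution $\pi$ as $P$. Item (vi) is a similarity assembly: (i) rearranges to $\hP=(H'D_\pi)\vP(H'D_\pi)^{-1}$, (iii) delivers $\tP=\Lambda\vP\Lambda^{-1}$, and $P=D_\pi^{-1}\vP'D_\pi$ shows $P$ shares the spectrum of $\vP'$, hence of $\vP$. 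The one genuinely delicate point I anticipate is the limiting argument inside (iv3): one must rigorously justify that on the finite state space, $\PP_x$-almost surely the chain $\hX$ either dies or becomes trapped in a single stochastic class, so that the $c_l$-weighted hitting probabilities really do exhaust $\varphi(x)$; once that is in hand, the rest of the theorem is bookkeeping around (i) and (ii).
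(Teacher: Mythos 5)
Your proposal is correct and follows essentially the same route as the paper's own proof: algebraic verification of (i)--(iii) and (vi), Perron--Frobenius and harmonicity for (iv), and the same closing argument for (v) via Proposition \ref{prop0}. The only notable micro-difference is that you deduce $St(\hP)\neq\emptyset$ in (iv3) from the probabilistic observation that $St(\hP)=\emptyset$ would force $\hT<\infty$ a.s.\ (hence $\hP^n\to 0$), whereas the paper deduces it by noting the last irreducible block $\hP_{\hI_\ell\times\hI_\ell}$ must be stochastic since it admits the strictly positive eigenvector $\varphi|_{\hI_\ell}$ with eigenvalue $1$ --- two trivially equivalent formulations (and, incidentally, your positivity argument in (ii) correctly invokes nonvanishing \emph{columns} of $H$, where the paper's text slips and says ``row'').
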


\begin{proof}
From $H {\hP} '= P H$, we find
$$
{\hP} H'= H' D_\pi \vP D_\pi^{-1} \,.
$$
By multiplying to the right by $D_\pi$ we get $(i)$. The part $(vi)$
follows directly in the finite nonsingular case.

\bigskip
 
Since $D_\pi \1=\pi$ we get that ${\hP} H' \pi=H' D_\pi \vP \1$.
Since $\vP$ is stochastic we get ${\hP} H' \pi=H' D_\pi \1=H' \pi$. Let
$\varphi= H' \pi$. Since $\pi>0$ and at each row
of $H$ there exists a strictly positive element, then $\varphi>0$.
Then $(ii)$ holds. Now define,
$$
\tP=D_\varphi^{-1} \hP D_\varphi\,.
$$
By using $(i)$ we get,
$$
\tP D_\varphi^{-1} H' D_\pi = D_\varphi^{-1} \hP H'  D_\pi= 
D_\varphi^{-1} H' D_\pi \vP
$$
Then the relation $\tP \Lambda= \Lambda \vP$ holds in $(iii)$, 
moreover
\begin{eqnarray}
\nonumber
\tP\1&=&D_\varphi^{-1} \hP D_\varphi \1=D_\varphi^{-1} 
\hP \varphi=D_\rho^{-1} \varphi=\1 
\,,\\
\nonumber
\Lambda \1 &=& D_\varphi^{-1} H' D_\pi \1= D_\varphi^{-1} 
H' \pi=D_\varphi^{-1} \varphi=\1\,.
\end{eqnarray}
Then $\tP$ and $\Lambda$ are Markov kernels. Finally
from the equality 
$$
H D_\varphi^{-1} \tP'  D_\varphi = H \hP'=P H \,,
$$
the relation $K \tP'= P K$ is straightforward. Hence $(iii)$
is verified.

\bigskip

Now assume $I$ is finite.
If $\hP$ is an irreducible strictly substochastic kernel then necessary
its spectral radius is strictly smaller that $1$, which contradicts
the equality $\hP \varphi=\varphi$, because $\varphi>0$. In the case 
$\hP$ is stochastic 
and irreducible, the equation  $\hP \varphi=\varphi$, $\varphi>0$, 
implies $\varphi=c \1$ for some constant $c>0$.  So $(iv1)$ and $(iv2)$ 
follow. 

\bigskip

Now assume that the matrix $\hP$ is substochastic and non irreducible. 
Let $\hI=\bigcup_{l=1}^\ell {\hI}_l$ be the partition
in irreducible components $\hP_{{\hI}_l\times {\hI}_l}$ such that
$x\in {\hI}_l$, $y\in {\hI}_{l'}$ and $\hP(x,y)>0$ implies $l'\ge l$.
The last submatrix $\hP_{{\hI}_\ell\times {\hI}_\ell}$ verifies,
$$
\hP_{{\hI}_{\ell}\times {\hI}_{\ell}}\, \varphi \big|_{{\hI}_{\ell}}=
\varphi \big|_{{\hI}_{\ell}}\,.
$$
Then $\hP_{{\hI}_\ell\times {\hI}_\ell}$ 
is an irreducible substochastic matrix whose Perron-Frobenius 
eigenvalue is $1$, so we deduce that $\hP_{{\hI}_\ell\times {\hI}_\ell}$ 
is stochastic and $\varphi \big|_{{\hI}_\ell}=c_\ell \1_{{\hI}_\ell}$
for some constant $c_\ell>0$, so $St(\hP)\neq \emptyset$.
Then, if $({\hI}_l\in St(\hP))$ are the irreducible 
stochastic classes the same argument implies that 
$\varphi \big|_{{\hI}_l}=c_l \1_{{\hI}_l}$ for some quantity
$c_l>0$ and this happens for all ${\hI}_l\in St(\hP)$.

\bigskip

Let $\hX=(\hX_t: t<\hT)$ be the Markov chain with kernel $\hP$.
It is known that all the trajectories that are not killed are 
attracted by $\bigcup\limits_{{\hI}_l\in St(\hP)}{\hI}_l$, that is
\begin{equation*}
\PP_x(\lim\limits_{n\to \infty}\hX_n\in 
\bigcup\limits_{{\hI}_l\in St(\hP)}{\hI}_l \; | \; \hT=\infty)=1.
\end{equation*}
On the other hand
the equality $\hP \varphi=\varphi$ expresses that $\varphi$ is 
an harmonic function for the chain $\hX$. Hence, for all $n\ge 0$ 
it is verified,
\begin{eqnarray*}
\varphi(x)&=&\EE_x(\varphi(\hX_n), \hT>n)\\
&=&\sum\limits_{{\hI}_l\in St(\hP)}\EE_x(\varphi(\hX_n), \hT>n, 
{\phT}_{{\hI}_l}<\hT)\\
\nonumber
&{}&+ 
\EE_x(\varphi(\hX_n), \hT>n, \hT<\min\{{\phT}_{{\hI}_l}: {\hI}_l\in 
St(\hP)\}) 
\,. 
\end{eqnarray*}
Then, by taking $n\to \infty$ in above expression and since 
$\lim\limits_{n\to \infty}\PP_x(\min\{{\phT}_{{\hI}_l}: {\hI}_l\in
St(\hP)>\hT>n)=0$, we get the relation (\ref{equatn6}),
$$
\varphi(x)=\sum\limits_{{\hI}_l\in St(\hP)} 
c_l\, \PP_x({\phT}_{{\hI}_l}<\hT)\,.
$$

Let us prove part $(iv4)$. Since there is a unique stochastic class 
the equality (\ref{equatn7'}) follows straightforwardly.
Then the transition probabilities of $\tP$ are
given by the Doob $h-$transform
$$
\tP(x,y)= \PP_x( {\phT}_{{\hI}_\ell}<\hT)^{-1} \hP(x,y)
\PP_y({\phT}_{{\hI}_\ell}<\hT)=
\PP_x(\hX_1=y \, | \,  {\phT}_{{\hI}_\ell}<\hT) \,, \; \;
\forall \, x,y\in \hI\,.
$$
The Markov property gives the formula for every cylinder.

\bigskip

Finally, let us show part $(v)$. 
Since the chain $\vP$ is positive recurrent
with stationary distribution $\pi$ it suffices to show that
${\widehat a}$ is an absorbing state for $\tP$.
This follows straightforwardly from the equality
$\tP=D_\varphi^{-1} {\hP} D_\varphi$, indeed it implies
$\tP(\widehat a,y)=\hP(\widehat a,y)
\frac{\varphi(y)}{\varphi(\widehat a)}=\delta_{y,\widehat a}$.
Also this proves the equality of the set of absorbing points 
for both kernels $\hP$ and $\tP$.
\end{proof}

\bigskip

\begin{remark}
\label{rem1}
We can exchange the roles of $P$ and $\vP$ in
the irreducible and positive recurrent case. Thus, in the hypothesis
of the Theorem we can take $\vP$ instead of $P$, so $\hP$ is $H-$dual of 
$\vP$, $H {\hP}'=\vP H$, and in all the statements of the Theorem we 
must change $P$ by $\vP$. $\Box$
\end{remark}

\begin{remark}
\label{rem4}
A probabilistic explanation of how appears $\varphi:= H' \pi>0$
can be done when $\hP$ is substochastic and $H$ is bounded.
In this case the dual relation $H{\hP}'=P H$ is expressed by
the expression (\ref{equatn3}),
$$
\forall \, x\in I, \, y\in  \hI,\, \forall \, n\ge 0\, :
\;\;\;
\EE_x(H(X_{n}, y))=\EE_y(H(x, \hX_{n}))\,.
$$
Since by hypothesis $X$ is an irreducible and positive recurrent
Markov chain then $\varphi$ appears as
the following limit on the left hand side,
$$
\lim\limits_{k\to\infty}\frac{1}{k}\sum\limits_{n\le k}
\EE_x(H(X_{n},y))=\sum\limits_{u\in I} \pi(u) H(u,y)=\varphi(y)\,.
$$
$\Box$
\end{remark}

\begin{remark}
\label{rem3}
We have
\begin{equation}
\label{equatn6'}
\Lambda(x,y)=\frac{1}{\varphi(x)}H(y,x) \pi(y)\,,
\end{equation}
in particular $\Lambda(x,y)=0$ if and only if $H(y,x)=0$.$\; \Box$
\end{remark}

\begin{remark}
\label{rem3'}
The formulas in Theorem \ref{theo1} state that $\Lambda$, $\hP$ and $\tP$ 
are invariant when $H$ is multiplied by a strictly positive constant.  
Then, we can fit $c>0$ and take $cH$ in order to have 
$\varphi(x)=1$ for all $x\in {\hI}_l$, 
or equivalently $c_l=1$, for some fixed stochastic class 
${\hI}_l\in St(\hP)$.
\end{remark}

\begin{remark}
\label{rem5'}
When the starting equality between stochastic kernels is the 
intertwining relation ${\tP} \Lambda= \Lambda {\vP}$, then
we have the duality relation $H \hP'=PH$ with 
$H=D_\pi^{-1}\Lambda'$ and $\hP=\tP$. In this case $\varphi=\1$. 
\end{remark}

We note the equality $\hI=\tI$ of the sets where the kernels $\hP$ and 
$\tP$ are defined in Theorem \ref{theo1}. On the other hand we
recall that in the finite case the positive recurrence property on $P$ 
follows from irreducibility. 

\bigskip

\begin{proposition}
\label{propo1}
Assume $H$ is nonsingular and has a constant column that is strictly 
positive, that is
$$
\exists {\widehat a}\in \hI \,: \;\;  H\mathbf{e}_{\widehat a}=c\, \1\,
\hbox{ for some } \, c>0\,.
$$
Then:

\medskip

$(i)$ ${\widehat a }$ is an absorbing state for $\hP$ 
(so $\{\widehat a\}$ is a stochastic class). 

\medskip

\noindent $(ii)$ Under the hypotheses of Theorem \ref{theo1},
$\pi'=\mathbf{e}_{\widehat a}'\Lambda$ holds and if
$\hP$ is strictly substochastic and $\{\widehat a\}$ is the 
unique stochastic class then 
$\PP_y( {\phT}_{\widehat a}<\hT)=\varphi(y)/\varphi(\widehat a)$ and the 
relation (\ref{equatn7}) is satisfied.
\end{proposition}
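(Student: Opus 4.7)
The plan is to leverage the special structure of $H$ to unlock part $(i)$ and then feed the resulting absorbing state straight back into the already proven parts of Theorem \ref{theo1} to obtain part $(ii)$.

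For $(i)$, I would apply the duality identity $H\hP' = PH$ to the column indicator vector $\mathbf{e}_{\widehat a}$. Using the hypothesis $H\mathbf{e}_{\widehat a}=c\1$ together with the fact that $P$ is stochastic, the right-hand side collapses to $PH\mathbf{e}_{\widehat a}=cP\1=c\1$, while the left-hand side is $H(\hP'\mathbf{e}_{\widehat a})$. Hence $H(\hP'\mathbf{e}_{\widehat a})=c\1=H\mathbf{e}_{\widehat a}$, and nonsingularity of $H$ permits cancellation, giving $\hP'\mathbf{e}_{\widehat a}=\mathbf{e}_{\widehat a}$. Read entry by entry this is exactly $\hP(\widehat a,y)=\delta_{y,\widehat a}$ for every $y\in\hI$, so $\widehat a$ is an absorbing state for $\hP$, and in particular the singleton $\{\widehat a\}$ is one of the irreducible components on which $\hP$ restricts to a stochastic matrix, hence belongs to $St(\hP)$.

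For $(ii)$, the first claim $\pi'=\mathbf{e}_{\widehat a}'\Lambda$ is then immediate from Theorem \ref{theo1}$(v)$, applied to the absorbing state $\widehat a$ just produced in $(i)$. When $\hP$ is strictly substochastic and $\{\widehat a\}$ is the unique stochastic class, I would invoke Theorem \ref{theo1}$(iv4)$ with $\hI_\ell=\{\widehat a\}$: formula (\ref{equatn7'}) specialises, with the choice $y=\widehat a$ being the only admissible one in $\hI_\ell$, to $\varphi(x)/\varphi(\widehat a)=\PP_x({\phT}_{\widehat a}<\hT)$; relabelling $x\mapsto y$ yields the displayed identity. Finally, the Doob transform description (\ref{equatn7}) is a verbatim transcription of the corresponding formula from Theorem \ref{theo1}$(iv4)$, since in this setup ${\phT}_{\hI_\ell}={\phT}_{\widehat a}$.

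There is essentially no obstacle: the only mildly nontrivial step is the passage from the constant-column assumption to $\hP'\mathbf{e}_{\widehat a}=\mathbf{e}_{\widehat a}$, which exploits both the stochasticity of $P$ and the invertibility of $H$. Everything after that is a bookkeeping specialization of parts $(v)$ and $(iv4)$ of Theorem \ref{theo1} to the singleton stochastic class $\{\widehat a\}$.
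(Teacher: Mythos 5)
Your proof is correct, and part $(i)$ actually takes a cleaner route than the paper's. The paper first passes to the reversed chain: it writes $\hP = H'D_\pi \vP D_\pi^{-1}{H'}^{-1}$ (from Theorem~\ref{theo1}$(i)$ and nonsingularity of $H$), then left-multiplies by $\mathbf{e}'_{\widehat a}$ and uses $(H\mathbf{e}_{\widehat a})' D_\pi = c\pi'$ together with the stationarity $\pi'\vP = \pi'$ and $H^{-1}\1 = c^{-1}\mathbf{e}_{\widehat a}$ to collapse the chain of factors to $\mathbf{e}'_{\widehat a}$. You instead apply the raw duality $H\hP' = PH$ directly to the vector $\mathbf{e}_{\widehat a}$, use only $P\1=\1$ to get $H\hP'\mathbf{e}_{\widehat a} = c\1 = H\mathbf{e}_{\widehat a}$, and cancel $H$ by nonsingularity to obtain $\hP'\mathbf{e}_{\widehat a} = \mathbf{e}_{\widehat a}$. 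This bypasses $\pi$, $D_\pi$, and $\vP$ entirely, so it needs fewer of the running hypotheses (only stochasticity of $P$, not its positive recurrence) and is genuinely more elementary. Your handling of part $(ii)$ — invoking Theorem~\ref{theo1}$(v)$ for $\pi' = \mathbf{e}'_{\widehat a}\Lambda$ and then specializing $(iv4)$ with $\hI_\ell = \{\widehat a\}$ — matches the paper's, which simply says "the rest follows straightforwardly."
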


\begin{proof}
$(i)$ From $H\mathbf{e}_{\widehat a}=c\, \1$ we get,
\begin{eqnarray*}
\mathbf{e}'_{\widehat a}\hP &=&\mathbf{e}'_{\widehat a}
H'D_\pi \vP D_\pi^{-1} H^{-1\prime } \!=
\left( H \mathbf{e}_{\widehat a}\right)^{\prime }D_\pi
\vP D_\pi^{-1} H^{-1\prime }\\
&=& c\,  \pi' \vP D_\pi^{-1} H^{-1\prime }\!=
\left( H^{-1} c\, \1 \right)^{\prime }=\mathbf{e}'_{\widehat a}.
\end{eqnarray*}
Then 
$$
\hP(\widehat a,y)=(\mathbf{e}_{\widehat a}^{\prime }\hP)(y)
=\mathbf{e}'_{\widehat a}(y)=\delta_{\widehat a,y}.
$$
So $\widehat a$ is an absorbing state for $\hP$.

\bigskip

\noindent $(ii)$ From Theorem \ref{theo1} $(v)$, $\widehat a$ 
is an absorbing state of $\tP$ and $\pi'=\mathbf{e}_{\widehat a}'\Lambda$. 
The rest of part $(ii)$ follows straightforwardly. 
\end{proof}

When $P$ does not satisfy positive recurrence let us
only consider the following special case.

\begin{proposition}
\label{propo2}
Let $x_0\in I$ be an absorbing point of the kernel $P$ and let 
$\hP$ be a substochastic kernel that is a $H-$dual of $P$: $H \hP'=PH$. 
Then $h(y):=H(x_0, y)$, $y\in \hI$, is a non negative $\hP-$harmonic
function. When $H$ is bounded and $\hP$ is a stochastic recurrent 
kernel, the $x_0-$row $H(x_0,\cdot)$ is constant.
\end{proposition}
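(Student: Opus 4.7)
The plan is to treat the two assertions separately: the harmonicity of $h$ is a direct unpacking of the duality identity at the $x_0$-row, exploiting that $x_0$ is $P$-absorbing, while the constancy in the bounded-recurrent case invokes the classical fact that bounded harmonic functions of an irreducible recurrent stochastic kernel are constant.

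For harmonicity I intend to read off the $(x_0, y)$ entry on both sides of $H \hP' = PH$. Since $x_0$ is absorbing for $P$, we have $P(x_0, z) = \delta_{z, x_0}$, so the right-hand side collapses to $(PH)(x_0, y) = H(x_0, y) = h(y)$. On the left,
\[
(H\hP')(x_0, y) = \sum_{z \in \hI} H(x_0, z)\, \hP(y, z) = \sum_{z \in \hI} \hP(y, z)\, h(z) = (\hP h)(y).
\]
Hence $\hP h = h$, and $h \geq 0$ is inherited from $H \geq 0$. This step is entirely formal and presents no obstacle.

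For the constancy assertion, boundedness of $H$ gives $h \in \F_b(\hI)$, so we have a bounded non-negative $\hP$-harmonic function. The plan is the standard martingale argument: since $\hP$ is stochastic and $h$ is bounded harmonic, the process $(h(\hX_n))_{n\ge 0}$ is a bounded $\PP_y$-martingale, hence converges $\PP_y$-almost surely. Recurrence ensures every state $z$ is visited infinitely often $\PP_y$-a.s., so the a.s.\ limit of $h(\hX_n)$ equals $h(z)$ for every such $z$; comparing two states $z, z'$ in the same recurrence class forces $h(z) = h(z')$. Taking \emph{recurrent} in the usual irreducible recurrent sense, one concludes that $h$ is globally constant, i.e.\ the row $H(x_0, \cdot)$ is constant. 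The only delicate point is the interpretation of the recurrence hypothesis; once irreducibility is assumed on the recurrent class containing the range of $\hX$, the martingale-convergence step is textbook.
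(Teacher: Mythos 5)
Your proof is correct and follows the paper's argument exactly: the harmonicity of $h$ is obtained by the same entrywise reading of $H\hP' = PH$ at the $x_0$-row using $P(x_0,\cdot)=\delta_{x_0,\cdot}$. The paper leaves the second claim implicit (it simply notes that harmonicity suffices), and your martingale-convergence argument is precisely the standard fact being invoked there, so there is no divergence in approach.
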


\begin{proof}
It suffices to show that the function $h$ is  $\hP-$harmonic.
Since $P(x_0,z)=\delta_{z,x_0}$ $\; \forall \, z\in I$, we get
$(P H) (x_0,y)= H (x_0,y)$. Therefore, if
$\hP$ verifies the duality equality (\ref{equatn2}) we get,
$(H \hP') (x_0,y) = H(x_0,y)=h(y)$. Then
$$
(\hP h)(y)=\sum\limits_{z\in \hI} \hP(y,z) H(x_0,z)=
\sum\limits_{z\in \hI} H(x_0,z) \hP'(z,y)=(H \hP') (x_0,y)=h(y)\,,
$$
and the result is shown.
\end{proof}

\section{ Classes of Dual matrices }

We consider the finite set case. We assume 
$I=\hI=\tI=\{0,\cdots, N\}$, so the kernels
are non negative $I\times I$ matrices and when they are substochastic 
the associated Markov chains take values in $I$.

\bigskip

We will study some classes of non negative matrices $H$ for which there 
exist substochastic kernels $P$ and $\hP$ in duality 
relation (\ref{equatn2}). So, in these cases we would be
able to apply the results established in Theorem \ref{theo1},
and Proposition \ref{propo1}. 

\subsection{ The potential case }
\label{sub34}
Let us see what happens with a quite general class of matrices, the finite
potential kernels.
Let $R$ be a strictly substochastic kernel with no stochastic 
classes (this is the case if $R$ is also irreducible). Then it has
a well defined finite potential,
$$
H=(\Id-R)^{-1}=\sum\limits_{n\ge 0} {R}^n\ge 0\,.
$$
So $H^{-1}=\Id-R$. (In particular no column nor row of $H$ vanishes). 

\bigskip

Let $P$ be a substochastic kernel. Define
$$
\hP'=H^{-1} P H=(\Id-R) P (\Id-R)^{-1} \,.
$$

\begin{proposition}
\label{propo3}
Assume that also the transposed matrix $R'$ is substochastic. Then,
$\hP {\1}\ge 0$ and
there exists a stochastic kernel $P$ for which it is verified $\hP\ge 0$. 
Indeed, the constant stochastic kernel $P=\frac{1}{N+1} {\1} {\1}'$ 
fulfills the property.
\end{proposition}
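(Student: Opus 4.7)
The key is to rewrite everything so that the non-negativity conditions on $R$, $R'$ and $P$ appear directly. From the defining relation $\hP' = H^{-1}PH = (\Id - R)\,P\,(\Id-R)^{-1}$ and the identity $(H^{-1})' = (H')^{-1} = \Id - R'$, transposing gives the clean formula
\begin{equation*}
\hP \;=\; H'\,P'\,(\Id - R') \,.
\end{equation*}
This will be the workhorse for part one, while for part two it is cleaner to stay with $\hP'$ itself.

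\textbf{Step 1 (the inequality $\hP\1 \ge 0$).} Apply the formula above to $\1$:
\begin{equation*}
\hP\,\1 \;=\; H'\,P'\,(\1 - R'\,\1)\,.
\end{equation*}
The hypothesis that $R'$ is substochastic says exactly that $\1 - R'\1 \ge 0$ coordinate-wise. The matrix $P'$ has non-negative entries (any substochastic kernel does), so $P'(\1 - R'\1) \ge 0$; and $H' = \sum_{n\ge 0}(R')^n \ge 0$ preserves this. So $\hP\1 \ge 0$, as required.

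\textbf{Step 2 (the constant stochastic example).} First, $P = \frac{1}{N+1}\1\1'$ is indeed stochastic because $P\1 = \frac{1}{N+1}\1(\1'\1) = \1$. Now compute $\hP'$ directly from $\hP' = (\Id - R)\,P\,H$:
\begin{equation*}
\hP' \;=\; \tfrac{1}{N+1}\,(\Id - R)\,\1\,\1'\,H \;=\; \tfrac{1}{N+1}\,(\1 - R\,\1)\,(\1' H)\,.
\end{equation*}
This is a rank-one outer product. The column vector $\1 - R\,\1 \ge 0$ because $R$ is substochastic (strictly, in fact), and the row vector $\1'H \ge 0$ because $H\ge 0$. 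Their outer product is therefore a non-negative matrix, so $\hP' \ge 0$ and hence $\hP \ge 0$.

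\textbf{Where the difficulty sits.} There is essentially no obstacle here: once the duality relation is transposed to isolate $\Id - R'$ on the right and $\Id - R$ on the left, everything is a one-line sign check. The only thing to be slightly careful about is the bookkeeping $(H^{-1})' = \Id - R'$ and the distinction between hypotheses on $R$ (used in Step 2) versus $R'$ (used in Step 1); the ``$R'$ substochastic'' assumption is exactly what is needed to kill the sign problem in Step 1 and is not automatic from $R$ being substochastic.
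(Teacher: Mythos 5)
Your proof is correct and follows essentially the same line as the paper: your Step 1 is the paper's computation $\1'\hP' = \1'(\Id-R)P\,H \ge 0$ written in transposed form, and your Step 2 computation $\hP' = \tfrac{1}{N+1}(\1-R\1)(\1'H)$ is exactly the paper's $\hP' = (P-RP)H$ specialized to the constant kernel. The only cosmetic difference is that the paper first extracts the general sufficient condition $RP\le P$ before plugging in $P=\tfrac{1}{N+1}\1\1'$, whereas you verify non-negativity directly for that $P$.
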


\begin{proof}
Since $R'$ is substochastic we have $(\Id-R') \1\ge 0$. Then
$$
\1'\hP'= \1' (\Id-R)P (\Id-R)^{-1}\ge 0\,.
$$

Now, since $(\Id-R)^{-1}\ge 0$ and $\hP'= (P-RP)(\Id-R)^{-1}$,
we get that once the relation $RP\le P$ is verified then $\hP'\ge 0$.
Since $R$ is substochastic the matrix
$P=\frac{1}{N+1} {\1} {\1}'$ makes the job. 
\end{proof}

\subsection{ Siegmund kernel}
A well-known case of a kernel $H$ arising as a potential of a strict 
substochastic kernel $R$ as above, is the Siegmund kernel. Let 
$R(x,y)=\1(x+1=y)$ so
it is a strictly substochastic (because the $N-$th row vanishes) and it 
has no stochastic classes. Its transposed matrix 
$R'(x,y)= \1(x=y+1)$ is also substochastic. 

\bigskip

By direct computation we get that $H_S=(\Id-R)^{-1}$ verifies 
$$
H_S(x,y)=\1(x\le y)
$$ 
so it is the Siegmund kernel. We have
$H_S^{-1}=\Id-R$, then $H_S^{-1}(x,y)= \1(x=y)-\1(x+1=y)$.

\bigskip

This case has been
studied in detail, for instance see \cite{DF} Section 5.
Let us summarize some well-known observations. We have
$(H_S \hP')(x,y)=\sum\limits_{z\ge x}\hP(y,z)$ and
$(PH_S)(x,y)=\sum\limits_{z\le y}P(x,z)$.
Then, the equation $H_S \hP'= PH_S$ gives
\begin{equation}
\label{equatn8}
\hP(y,x)=\sum\limits_{z\ge x} \hP(y,z)-\sum\limits_{z> x}\hP(y,z)=
\sum\limits_{z\le y}(P(x,z)- P(x+1,z))
\end{equation}
In particular $\hP\ge 0$ requires the condition,
\begin{equation}
\label{equatn8'}
\forall y\in I\,: \; 
\sum\limits_{z\le y}P(x,z) \hbox{ decreases with } \, x\in I. 
\end{equation}
In this case $P$ is called monotone.

\medskip

Also, from $\hP(N,x)=\sum\limits_{z\le N} (P(x,z)-P(x+1, z))$
we deduce that
\begin{equation*}
P \hbox{ stochastic }\; \Rightarrow \; \hP(N,x)=\delta_{x,N},
\end{equation*}
so $N$ is an absorbing state of $\hP$.
Also from (\ref{equatn8}) we get that
\begin{equation}
\label{equatn9}
\hP(N-1,N)=\sum\limits_{z\le N-1}P(N,z)=1-P(N,N)\,.
\end{equation}
We also observe that
\begin{equation*}
\sum\limits_{x\le N} \hP(y,x)=\sum\limits_{z\le y}P(0,z)\,,
\end{equation*}
in particular 
\begin{equation}
\label{equatn10}
\hP \1\le \1  \, \hbox{ so } \, \hP \hbox{ is substochastic },
\end{equation}
and also $\sum\limits_{x\le N} \hP(0,x)=P(0,0)$. Then,
\begin{eqnarray}
\label{equatn11'}
P(0,0)=1 &{}& \Rightarrow \; \hP \hbox{ is stochastic } \,;\\
\label{equatn11}
P(0,0)<1 &{}& \Rightarrow \;
\sum\limits_{x\le N} \hP(0,x)<1\, \hbox{ and } \hP 
\hbox{ looses mass through } 0.
\end{eqnarray}
This last case occurs for any irreducible stochastic kernel $P$
with $N\ge 1$. Indeed, in this case $P(0,0)=1$ cannot happen because
it contradicts irreducibility. 

\medskip

Also we get,
\begin{equation*}
P(0,0)+P(0,1)=1 \; \Rightarrow \;  \hP \hbox{ does not 
loose mass through } \{1,\cdots, N\}\,.
\end{equation*}

When the finite matrix $P$ is irreducible we can apply 
Theorem \ref{theo1} and in this case
\begin{equation}
\label{equatn12}
\varphi(x)=(H'_S\pi)(x)=\sum\limits_{y\in I} \1(y\le x) \pi(y)=
\sum\limits_{y\le x}\pi(y)=: \pi^c(x)\,,
\end{equation}
is the cumulative distribution of $\pi$. We have that $\pi^c$ is not
constant because $\pi>0$.

\bigskip

Let us show that 
\begin{equation}
\label{eqn01}
N \, \hbox{ is the unique absorbing state of } \, \hP \,.
\end{equation}
Indeed, from (\ref{equatn8}) the unique absorption implies that 
$x<N$ verifies $\hP(x,y)=\delta_{y,x}$ if and only if 
$\sum\limits_{z\le x} P(x,z)=1$ 
and $\sum\limits_{z\le x} P(x+1,z)=0$. Therefore, also from (\ref{equatn8}),
we get 
$$
\sum\limits_{z\le x} P(y,z)=1 \;\; \forall \, y\le x \hbox{ and }
\sum\limits_{z\le x} P(y,z)=0  \;\; \forall \, y> x\,,
$$ 
which contradicts the irreducibility of $P$.

\bigskip

We obtain the following result. In it we assume $N\ge 1$ to
avoid the trivial case when $N=0$ and $P(0,0)=1$.

\begin{corollary}
\label{coro21}
Let $H$ be the Siegmund kernel, $P$ be a monotone 
finite irreducible stochastic 
kernel with stationary distribution $\pi$. Let $H_S{\hP}'=P H_S$ with
$\hP\ge 0$. Then: 

\medskip

\noindent $(i)$ $\hP$ is a strictly substochastic kernel that looses 
mass through $0$, and parts $(iv2)$ and  
$(iv3)$ of Theorem \ref{theo1} hold. 

\medskip

\noindent  $(ii)$ $\varphi=\pi^c$ and the stochastic intertwining 
kernel $\Lambda$ verifies
\begin{equation}
\label{eqn02}
\Lambda(x,y)=\1(x\ge y)\frac{\pi(y)}{\pi^c(x)}\,.
\end{equation}
and the intertwining matrix $\tP$ of $\vP$ is given by
\begin{equation*}
\tP(x,y)=\hP(x,y)\frac{\pi^c(y)}{\pi^c(x)}\,\;\; x,y\in I\,.
\end{equation*}

\noindent  $(iii)$  $N$ is the unique absorbing state for $\hP$ and      
Theorem \ref{theo1} parts $(iv4)$ and $(v)$ are verified with 
${\hI}_\ell=\{N\}$ and ${\widehat a}=N$. In particular 
$\pi'=\mathbf{e}_{N}'\Lambda$
holds.

\medskip

\noindent $(iv)$ The following relation holds:
\begin{equation}
\label{equatn15}
\Lambda \mathbf{e}_N=\pi(N) \mathbf{e}_N 
\end{equation}
\end{corollary}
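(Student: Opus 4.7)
The plan is to evaluate $\Lambda\mathbf{e}_N$ componentwise using the explicit formula for $\Lambda$ established in part $(ii)$, namely (\ref{eqn02}). Since $\mathbf{e}_N$ picks out the $N$-th column of $\Lambda$, I would write
\[
(\Lambda\mathbf{e}_N)(x)=\Lambda(x,N)=\1(x\ge N)\,\frac{\pi(N)}{\pi^c(x)}\qquad\text{for }x\in I=\{0,1,\dots,N\}.
\]

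The key observation is that in the finite state space $I=\{0,\dots,N\}$ the indicator $\1(x\ge N)$ is nonzero only at $x=N$, where it equals $1$; simultaneously, $\pi^c(N)=\sum_{y\le N}\pi(y)=1$ because $\pi$ is a probability distribution. Substituting these two facts yields $(\Lambda\mathbf{e}_N)(x)=\pi(N)\,\delta_{x,N}$, which is precisely the $x$-coordinate of $\pi(N)\mathbf{e}_N$.

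There is no real obstacle here: the identity is a one-line consequence of the explicit triangular form (\ref{eqn02}), combined with $\pi^c(N)=1$. It is worth noting that this statement is consistent with the other identities in the corollary; for instance, combining $\pi'=\mathbf{e}_N'\Lambda$ from $(iii)$ with (\ref{equatn15}) gives $\pi'\mathbf{e}_N=\pi(N)=\mathbf{e}_N'\Lambda\mathbf{e}_N=\pi(N)\mathbf{e}_N'\mathbf{e}_N=\pi(N)$, confirming internal consistency and suggesting that $\mathbf{e}_N$ is an eigenvector of $\Lambda$ with eigenvalue $\pi(N)$, as required.
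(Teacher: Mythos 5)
Your proof of part $(iv)$ is correct and is essentially the paper's own argument: the paper justifies (\ref{equatn15}) as ``a direct computation from (\ref{eqn02}) and $\varphi(N)=1$,'' which is exactly your observation that $\1(x\ge N)=\delta_{x,N}$ on $I$ while $\pi^c(N)=1$. Your closing ``consistency check,'' however, is vacuous: $\pi'\mathbf{e}_N=\pi(N)$ holds by definition, so the computation merely rewrites $\pi(N)=\pi(N)$ and tests nothing.

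The genuine gap is that the statement to prove is the entire corollary, yet you address only part $(iv)$ and take $(i)$--$(iii)$ for granted --- even though your calculation relies on the explicit form (\ref{eqn02}) of $\Lambda$ and on $\varphi=\pi^c$, which are precisely the content of part $(ii)$. In the paper, parts $(i)$--$(iii)$ are obtained by combining Theorem \ref{theo1} with the Siegmund-specific facts established just above the corollary: (\ref{equatn10})--(\ref{equatn11}) give that $\hP$ is substochastic and, since $P(0,0)<1$ for an irreducible $P$ with $N\ge 1$, strictly substochastic losing mass through $0$; (\ref{equatn12}) gives $\varphi=\pi^c$, and substituting $H=H_S$ and $\varphi=\pi^c$ into (\ref{equatn6'}) yields (\ref{eqn02}); and (\ref{eqn01}) shows that $N$ is the unique absorbing state of $\hP$, which together with Theorem \ref{theo1}~$(v)$ gives $\pi'=\mathbf{e}_N'\Lambda$. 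Your write-up should at least assemble these references before deploying the one-line calculation for $(iv)$.
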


\begin{proof}
The first three parts are direct consequence of Theorem \ref{theo1}, relations 
(\ref{equatn10}), (\ref{equatn11}), (\ref{equatn12}), (\ref{eqn01}).  
Finally, (\ref{equatn15}) is a direct computation from (\ref{eqn02}) and  
$\varphi(N)=1$. 
\end{proof}

\subsection{ Duality for finite state space birth and death chains }
Recall $I=\hI=\tI=\{0,\cdots, N\}$.
Let $X=(X_n: n\ge 0)$ be a discrete birth and death (BD) chain with 
transition Markov kernel $P=(P(x,y): x,y\in I)$. Then 
$P(x,y)=0$ if $|x-y|>1$ and
$$
P(x,x+1)=p_x, \; P(x,x-1)=q_x\, \; P(x,x)=r_x\,, \;\; x\in I \,,
$$
with 
\begin{equation*}
q_x+r_x+p_x=1 \;\; \forall x\in I \,\; 
\hbox{  and boundary conditions } q_0=p_N=0\,.
\end{equation*}
We always take
\begin{equation*}
q_{x},\, p_{x}>0, x\in \{1,..,N-1\}\,.
\end{equation*}
We will assume the irreducible case, which in this case is equivalent to 
the condition
\begin{equation}
\label{equatn13}
p_{0}>0\,, \;\; q_{N}>0\,. 
\end{equation}
(A unique exception will be done in Subsection \ref{subsec43} where 
we will explicitly assume that (\ref{equatn13}) is not satisfied.)
The stationary distribution $\pi=\left(\pi(x): x\in I\right)$ verifies 
$\pi(y)=\pi(0)\prod\limits_{z<y}
\frac{p_{z}}{q_{z+1}}>0$, $y\in \{1,..,N\}$, where $\pi(0)$ 
fulfills $\sum\limits_{y\in I} \pi(y)=1$. 

\bigskip

The matrix $P$ is self-adjoint with the inner product given by $\pi$, 
that is it verifies 
$\pi(x)P(x,y)=\pi(y)P(y,x)$ for all $x,y$. So, 
$P=\overleftarrow{P}$ where
$\vP= D_{\pi}^{-1}P'D_{\pi}$ is the transition matrix of the time
reversed process and $P$ has real eigenvalues.

\bigskip

The unique constraint in (\ref{equatn8}) to get that
$\hP\ge 0$ is satisfied, is for
$y=x$, that is we need that the condition $\hP(x,x)\ge 0$ is
verified and it reads
\begin{equation}
\label{equatn14}
\forall x\in \{0,\cdots, N-1\}\, : \;\; p_x+q_{x+1}\le 1 \,.
\end{equation}
This is the equivalent of (\ref{equatn8'}) for BD chains.
So, when (\ref{equatn14}) is satisfied we say that $P$ is monotone.
In this case the Siegmund dual $\hP$ exists and it is a BD kernel with
$$
\hP(x,x-1)=p_x \,, \; 
\hP(x,x)=1-(p_x+q_{x+1}) \,, \; 
\hP(x,x+1)=q_{x+1} \,.
$$
The drift  of $X$ at $x$
is $f(x):=p_{x}-q_{x}$, and the drift of
$\widehat{X}$ at $x$ is
$\widehat{f}( x) =\widehat{p}_{x}-\widehat{q}(x)=-f(x+1)
+( p_{x+1}-p_{x})$, so
$-f( x+1) -r_{x+1}\leq \widehat{f}(x) \leq -f(x) +r_{x}$.

\bigskip

Note that $\hP(0,0)=1-(p_0+q_1)$, $\hP(0,1)=q_1$, then the Markov chain $\hX$  
looses mass through the state $0$ if and only if $p_0=0$ or 
equivalently 
$r_0<1$. On the other hand
$\hP(N,N)=1-(p_N+q_{N+1})=1$ (because $p_N=q_{N+1}=0$), so 
$N$ is an absorbing 
state. When (\ref{equatn14}) holds we say that $P$ is a monotone kernel.
From this analysis, Corollary \ref{coro21}, and the reversibility
relation $P=\vP$ we can state the following result.

\begin{corollary}
\label{coro31}
Let $H$ be the Siegmund kernel and $P$ be a finite irreducible 
stochastic BD chain with monotone kernel $P$ and whose 
parameters are $p_x, q_x$. Let $\pi$ be the 
stationary distribution of $P$. Then, the dual matrix
$\hP$ defined by ${\hP}'=H_S^{-1} P H_S$ is a strictly substochastic 
kernel that looses mass through the state $0$. Moreover:

\medskip

\noindent $(i)$  $\varphi=\pi^c$ and parts $(iv2)$ and $(iv3)$ 
of Theorem \ref{theo1} hold.

\medskip

\noindent  $(ii)$ $N$ is an absorbing state
of $\hP$ and $\{N\}$ is the unique stochastic class of $\hP$,  
all the other states in $I$ are transient, and
Theorem \ref{theo1} $(iv4)$ is verified with ${\hI}_\ell=\{N\}$.

\medskip

\noindent  $(iii)$ Let $\Lambda$ be the stochastic kernel given by 
(\ref{eqn02}). Then, the $\Lambda-$intertwining matrix $\tP$ 
of $P$ is given by
$$
\tP(x,x-1)=p_x \frac{\pi^c(x-1)}{\pi^c(x)}\,, \;
\tP(x,x)=1-(p_x+q_{x+1})\,, \;
\tP(x,x+1)=q_{x+1}\frac{\pi^c(x+1)}{\pi^c(x)}\,.
$$
\end{corollary}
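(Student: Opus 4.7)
The plan is to treat this corollary as a direct specialization of Corollary \ref{coro21} to reversible BD chains, with only two small items needing separate verification. The monotonicity assumption (\ref{equatn14}) is precisely the BD version of (\ref{equatn8'}), so the Siegmund dual $\hP$ exists as a non-negative kernel with the explicit tridiagonal form displayed just before the statement. Reversibility of BD chains gives $\vP=P$, so the intertwining relation of Theorem \ref{theo1}(iii), $\tP\Lambda=\Lambda\vP$, becomes $\tP\Lambda=\Lambda P$ and Corollary \ref{coro21} applies verbatim. Part (i) then follows immediately: $\varphi=\pi^c$ is (\ref{equatn12}), and (\ref{equatn10})--(\ref{equatn11}) together with the fact that irreducibility of $P$ forces $P(0,0)<1$ when $N\ge 1$ give that $\hP$ is strictly substochastic losing mass through $0$, so parts (iv2)--(iv3) of Theorem \ref{theo1} hold.

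For part (ii), the explicit BD form yields $\hP(N,N)=1-(p_N+q_{N+1})=1$, so $\{N\}$ is an absorbing (hence stochastic) class, and uniqueness of $N$ as an absorbing state is the general Siegmund fact (\ref{eqn01}). What remains is to show that every $x<N$ is transient for $\hX$, so that $\{N\}$ is in fact the only stochastic class. I would argue this by observing that $\hP(y,y+1)=q_{y+1}$ is strictly positive for all $y\in\{0,\ldots,N-1\}$: interior positivity $q_y>0$ on $\{1,\ldots,N-1\}$ handles $y\le N-2$, and the irreducibility boundary condition $q_N>0$ from (\ref{equatn13}) handles $y=N-1$. Consequently, from any $x<N$ there is a positive-probability path $x\to x+1\to\cdots\to N$ in $\hX$, and since $N$ is absorbing, $x$ cannot lie in any stochastic class; combined with the mass loss through $0$, this makes $x$ transient. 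Theorem \ref{theo1}(iv4) then applies with $\hI_\ell=\{N\}$.

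Part (iii) is a direct substitution into the formula $\tP(x,y)=\hP(x,y)\,\varphi(y)/\varphi(x)$ from Theorem \ref{theo1}(iii), with $\varphi=\pi^c$. Since $\hP$ is tridiagonal with the entries given above, only the three neighbors of $x$ contribute to row $x$ of $\tP$; the diagonal factor $\pi^c(x)/\pi^c(x)=1$ leaves $\tP(x,x)=\hP(x,x)=1-(p_x+q_{x+1})$ unchanged, and the off-diagonal entries acquire the ratios $\pi^c(x\pm 1)/\pi^c(x)$ as stated. I expect no substantive obstacle; the only step not reducible to bookkeeping is the transience verification in (ii), which uses the full irreducibility condition (\ref{equatn13}) and not merely interior positivity of $p_x,q_x$.
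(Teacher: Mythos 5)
Your proposal is correct and takes essentially the same route as the paper, which simply cites the preceding analysis of the BD Siegmund dual, Corollary \ref{coro21}, and the reversibility identity $P=\vP$. Your additional explicit argument for transience of all $x<N$ (via the positive right-jump chain $\hP(y,y+1)=q_{y+1}>0$ leading to the absorbing state $N$) is a harmless elaboration of what the paper leaves implicit in invoking \eqref{eqn01} and part $(iv4)$; note only that the ``mass loss through $0$'' clause is superfluous for that step, since reaching an absorbing state outside one's own communicating class already forces transience.
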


\subsection{Absorbing points for the BD kernels}
\label{subsec43}
Let us modify the BD kernel $P$ by taking $0$ as an 
absorbing state. That is, instead of the irreducibility 
conditions (\ref{equatn13}) we take $p_{0}=0$ and no restriction
on $q_{N}$, it could be $0$ or $>0$.
Assume $P$ is monotone, so (\ref{equatn14}) 
holds. Then the BD kernel $\hP$ is stochastic, see (\ref{equatn11'}). 
In this case $N$ is the unique absorbing state for $\hP$.

\bigskip

Let us describe what happens by exploiting the special form of the 
Siegmund dual. By evaluating (\ref{equatn3}) at $y=0$ we get
\begin{equation}
\label{equatn16}
\PP_x(X_n\le 0))=\PP_0(x\le \hX_n)\,,
\end{equation}
and by evaluating (\ref{equatn3}) at $x=N$ we obtain
\begin{equation}
\label{equatn17}
\PP_N(X_n>y)=\PP_y(\hX_n<N)\,.
\end{equation}

Now there are two cases:

\noindent $(i)$ If $q_N>0$ then $0$ is the unique absorbing state 
for $P$. By (\ref{equatn9}) we get that $\hP(N-1,N)=q_N>0$, so $N$ 
is an absorbing state that attracts all the trajectories of the chain,
$\PP_x(\lim\limits_{n\to \infty} X_n=N)=1$ for $x\in I$.

\bigskip

\noindent $(ii)$ If $q_N=0$, then $0$ and $N$ are absorbing 
states for $P$. By using (\ref{equatn9}) we get that $\hP(N-1,N)=q_N=0$,
so $N$, besides being an absorbing state for $\hP$ is an 
isolated state for $\hP$ (that is $\hP(y,N)=0$ for all $y<N$). 
Therefore it does not attract any of the  trajectories starting 
from a state different from $N$. Hence,
the equation (\ref{equatn17}) is simply the equality $1=1$ when $y<N$. 

\bigskip

Let us summarize which is the picture for  $(ii)$: $P$ has $0$ and 
$N$ as absorbing states that attract all the trajectories of its 
associated Markov 
chain $X$, $\hP$ is stochastic, $N$ is a $\hP-$absorbing isolated 
state, and $\hP\big|_{I\setminus \{N\}\times I\setminus \{N\}}$ is 
stochastic and irreducible. Let 
${\widehat{\pi}}_*=({\widehat{\pi}}_*(z): z\in I\setminus \{N\})$ 
be the stationary distribution of  the submatrix 
$\hP\big|_{I\setminus \{N\}\times I\setminus \{N\}}$. 

\bigskip

Let $\phi(x)=\PP_{x}\left( \lim\limits_{n\to \infty}X_{n}=0\right)$
be the absorption probability at $0$ of the chain $X$ starting from 
$x$. We have the following result.

\begin{proposition}
\label{propo4}
If $p_{0}=0$ and $q_N=0$ then $0$ and $N$ are absorbing states 
for $P$ and 
$\hP\big|_{I\setminus \{N\}\times I\setminus \{N\}}$ is stochastic
and has $0$ as an absorbing point.
Let $\phi(x)=\PP_{x}\left( \lim\limits_{n\to \infty}X_{n}=0\right)$,
and ${\widehat{\pi}}_*=({\widehat{\pi}}_*(z): z\in I\setminus \{N\})$
be the stationary distribution of 
$\hP\big|_{I\setminus \{N\}\times I\setminus \{N\}}$.
Then
$$
\phi(x)=1-\frac{\eta(x) }{\eta(N) }=1-{{\widehat{\pi}}_*}^c(x+1)
$$
where ${{\widehat{\pi}}_*}^c$ is the cumulative distribution
of ${\widehat{\pi}}_*$ and 
$\eta(x):=\sum_{y=0}^{x-1}\prod_{z=1}^{y}\frac{q_{z}}{p_{z}}$
is the scale function of $P$.
\end{proposition}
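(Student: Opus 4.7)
The plan is to prove the two equalities separately. The first, $\phi(x)=1-\eta(x)/\eta(N)$, is the classical scale-function formula for a BD chain with absorbing endpoints. I would verify directly from the three-term recursion and the telescoping identity $\eta(y+1)-\eta(y)=\prod_{z=1}^{y}q_z/p_z$ that $\eta$ is $P$-harmonic on the interior $\{1,\dots,N-1\}$, with $\eta(0)=0$. Setting $\tau:=T_0\wedge T_N$, which is finite almost surely because from any transient state the chain is absorbed in $\{0,N\}$ geometrically fast, the process $(\eta(X_n))$ stopped at $\tau$ is a bounded martingale; optional stopping yields $\eta(x)=\eta(N)\PP_x(T_N<T_0)$, and since absorption is certain $\phi(x)=1-\PP_x(T_N<T_0)=1-\eta(x)/\eta(N)$.

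For the second equality I would exploit the Siegmund-duality identity (\ref{equatn16}), $\PP_x(X_n\le 0)=\PP_0(\hX_n\ge x)$, and pass to the limit in $n$. On the left, since $0$ is absorbing for $P$, $\PP_x(X_n\le 0)=\PP_x(X_n=0)\uparrow\phi(x)$. On the right, $N$ is an isolated absorbing state of $\hP$, so the chain $\hX$ started at $0$ stays forever in $I\setminus\{N\}$, where $\hP$ restricts to a stochastic BD kernel that is irreducible on the finite set $\{0,\dots,N-1\}$ (irreducibility coming from $p_x,q_{x+1}>0$ for interior $x$), hence positive recurrent with stationary distribution $\widehat{\pi}_*$. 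Averaging both sides of the duality over $n=0,\dots,k-1$ and invoking the Cesaro ergodic theorem on the right gives
$$\phi(x)=\lim_{k\to\infty}\frac{1}{k}\sum_{n=0}^{k-1}\PP_0(\hX_n\ge x)=\widehat{\pi}_*(\{x,\dots,N-1\}),$$
which is the announced expression of $\phi(x)$ as a tail probability of $\widehat{\pi}_*$, i.e.\ in terms of the cumulative distribution $\widehat{\pi}_*^c$.

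The main technical point is that the dual BD kernel can be periodic, so pointwise convergence of $\PP_0(\hX_n=\cdot)$ is not guaranteed; routing the passage to the limit through Cesaro averages, which always converge for a finite irreducible stochastic chain, bypasses this cleanly. The left-hand side of the duality converges in the ordinary sense to $\phi(x)$, so its Cesaro average has the same limit, and matching the two Cesaro limits completes the argument.
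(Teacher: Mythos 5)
Your proof follows the paper's argument in both halves: a martingale/optional-stopping derivation of $\phi(x)=1-\eta(x)/\eta(N)$, and passage to the limit in the Siegmund-duality identity (\ref{equatn16}) for the second equality. The one genuine addition is your Cesaro-averaging step to guard against periodicity of the restricted dual $\hP\big|_{I\setminus\{N\}\times I\setminus\{N\}}$; this is a legitimate concern in general, but under the paper's standing assumption $p_x>0$ for $x\in\{1,\dots,N-1\}$ one has $q_1<1$, hence $\hP(0,0)=1-q_1>0$, so the restricted dual chain is automatically aperiodic and the ordinary limit $\PP_0(\hX_n\ge x)\to\sum_{z\ge x}\widehat{\pi}_*(z)$ already holds. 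Thus the Cesaro detour, while correct and harmless, is not actually needed here; the paper's direct limit is justified once aperiodicity is noted.
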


\begin{proof}
The first equality follows from the fact that $\eta$ is a martingale 
and $\eta(0)=0$. For the second relation
we take $x<N$ and let $n\to \infty$ in the 
formula (\ref{equatn16}), which gives
$$
\phi(x)=\sum\limits_{z\ge x}
{\widehat{\pi}_*}(z)=1-{{\widehat{\pi}}_*}^c(x+1)\,.
$$
\end{proof}

\subsection { The spectral characterization.}
Let us give a sufficient spectral property for the monotonicity of 
the kernel $P$ for an irreducible BD chain taking values on 
$I=\{0,\cdots, N\}$.
Consider the polynomials 
$\left( \frak{q}_{y}(t) : y\in I\right)$ with
$t\in [-1,1]$, determined by: $\frak{q}_{0}(t) =1$ for all $t$
and the recurrence:
\begin{eqnarray*}
t\frak{q}_{0}(t) &=&
p_{0}\frak{q}_{1}(t) + r_{0}\frak{q}_{0}(t) \,,\\
t\frak{q}_{y}(t)&=&p_{y}\frak{q}_{y+1}(t) +
r_{y}\frak{q}_{y}(t) +
q_{y}\frak{q}_{y-1}(t) \,,\;\, y\in \{ 1,\cdots, N-1\} \,.
\end{eqnarray*}
It holds $\frak{q}_{y}(1) =1$ for all $y\geq 0$ and
the polynomial $\frak{q}_{y}(t) $ is of degree $y$ in $t$.

\bigskip

Let $Z:=\left\{t_{k}: R_{N+1}(t_{k}) =0\right\}$ be
the zeros of the polynomial
$R_{N+1}(t) = t\frak{q}_{N}(t) -r_{N}\frak{q}_{N}(t)
-q_{N}\frak{q}_{N-1}(t)$, which is 
of degree $N+1$. The set $Z$ constitutes the
spectrum of $P$ (see \cite{Karlin}, p. 78).
All the zeros are simple and we order them
by $1=t_{0}>t_{1}>..>t_{N}\geq -1$. 
The quantity $1-t_{1}$ is the
spectral gap. The spectral probability measure on $[-1,1]$ is
$\mu( dt) :=\sum_{k=0}^{N}\mu _{k}\delta _{t_{k}}$, with respect to 
which $(\frak{q}_{y}(t) : y\geq 1)$ are orthogonal.
It is known that $\mu_{0}=\pi _{0}$.

\bigskip

Let $N=2N_{0}$ be even. Assume that the BD chain is given 
by $r_x=0$ for all $x\in I$, and that it is  reflected at the 
boundaries $\{ 0,N\}$, so $p_0=q_N=1$. 
In this case the spectral measure
is symmetric on $[-1,1]$, in particular $t_{N_{0}}=0$
and $t_{2N_{0}}=-1$. When $N=2N_{0}+1$ is odd, the spectral measure 
is again symmetric, but $\{ 0\} $ is no longer an eigenvalue 
and $t_{N_{0}}>0$.

\bigskip

A spectral sufficient condition for the monotone property 
(\ref{equatn14}) is given below in part $(i)$. 
This result can be found in Lemma 2.4 of \cite{F}, and here we give 
a different proof. On the other hand  note that
when $r_{x}\geq 1/2\; \, \forall \, x\in I$ then obviously
the monotone condition (\ref{equatn14}) is satisfied.
In part $(ii)$ we reinforce this implication.

\begin{proposition}
\label{propo5}
\noindent $(i)$ If a BD chain is spectrally non negative, then 
it is monotone.

\bigskip

\noindent $(ii)$ If $r_{x}\geq 1/2$ for all $x\in I$, then
the BD chain is spectrally positive. 

\end{proposition}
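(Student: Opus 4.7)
The plan is to work with the symmetric matrix $\tilde P := D_{\sqrt\pi}\, P\, D_{\sqrt\pi}^{-1}$, which is similar to $P$ and therefore has the same spectrum. Using reversibility $\pi(x)p_x = \pi(x+1)q_{x+1}$ (that is, $P=\vP$) one checks that $\tilde P$ is symmetric tridiagonal with diagonal $\tilde P(x,x)=r_x$ and off-diagonal $\tilde P(x,x+1)=\tilde P(x+1,x)=\sqrt{p_x q_{x+1}}$. Thus spectral non-negativity of $P$ is equivalent to $\tilde P$ being positive semidefinite as a quadratic form.

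For part $(i)$, I test $\langle v,\tilde P v\rangle\ge 0$ against vectors supported on two consecutive indices, $v = \alpha\mathbf{e}_x + \beta\mathbf{e}_{x+1}$. Positive semidefiniteness forces the $2\times 2$ principal minor to be non-negative, i.e.\ $r_x r_{x+1}\ge p_x q_{x+1}$ for every $x\in\{0,\dots,N-1\}$. From $r_x+p_x+q_x=1$ with $q_x\ge 0$ one has $r_x\le 1-p_x$, and similarly $r_{x+1}\le 1-q_{x+1}$. Combining,
\begin{equation*}
p_x q_{x+1} \le r_x r_{x+1} \le (1-p_x)(1-q_{x+1}) = 1-p_x-q_{x+1}+p_x q_{x+1},
\end{equation*}
which reduces to $p_x+q_{x+1}\le 1$, exactly the monotonicity condition (\ref{equatn14}).

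For part $(ii)$, I bound the full quadratic form
\begin{equation*}
\langle v,\tilde P v\rangle = \sum_{x=0}^{N} r_x v_x^2 + 2\sum_{x=0}^{N-1}\sqrt{p_x q_{x+1}}\, v_x v_{x+1}
\end{equation*}
from below by means of the elementary inequality $\sum_{x=0}^{N-1}(v_x\sqrt{p_x}+v_{x+1}\sqrt{q_{x+1}})^2\ge 0$. Expanding it and reindexing $\sum_{x=0}^{N-1}v_{x+1}^2 q_{x+1}=\sum_{x=1}^{N}v_x^2 q_x$, then using the boundary conditions $q_0=p_N=0$ to absorb the missing endpoints, one telescopes
\begin{equation*}
\sum_{x=0}^{N-1}v_x^2 p_x + \sum_{x=0}^{N-1}v_{x+1}^2 q_{x+1} = \sum_{x=0}^{N}(p_x+q_x)v_x^2,
\end{equation*}
and therefore $2\sum_{x=0}^{N-1}\sqrt{p_x q_{x+1}}\,v_x v_{x+1}\ge -\sum_x(p_x+q_x)v_x^2$. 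Substituting yields
\begin{equation*}
\langle v,\tilde P v\rangle \ge \sum_{x=0}^{N}(r_x-p_x-q_x)v_x^2 = \sum_{x=0}^{N}(2r_x-1)v_x^2 \ge 0
\end{equation*}
under the hypothesis $r_x\ge 1/2$, so $\tilde P\succeq 0$ and the spectrum of $P$ lies in $[0,1]$.

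The main technical care is at the telescoping step of part $(ii)$, where one has to reindex the $q_{x+1}$-sum and use the vanishing of $q_0$ and $p_N$ to merge two sums of different extents into the single sum $\sum_x(p_x+q_x)v_x^2$. Once this is done, both parts amount to standard quadratic-form estimates on the symmetric tridiagonal matrix $\tilde P$.
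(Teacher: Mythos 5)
Your proof is correct, and the two parts differ in how they compare to the paper. For part $(ii)$ you and the paper are doing essentially the same thing: the paper factors the symmetrized matrix as $Q = 2D_r - I + S'S$ where $S$ is the bidiagonal matrix with $S(x,x)=\sqrt{p_x}$, $S(x,x+1)=\sqrt{q_{x+1}}$, and notes $z'Qz=\sum_x(2r_x-1)z_x^2+\lvert Sz\rvert^2$; your sum-of-squares bound $\sum_{x=0}^{N-1}\bigl(v_x\sqrt{p_x}+v_{x+1}\sqrt{q_{x+1}}\bigr)^2\ge 0$ is literally $\lvert Sv\rvert^2\ge 0$ expanded out, so the argument is the same. (Both versions, including the paper's, really only give semidefiniteness when some $r_x=1/2$, since $S$ has a nontrivial kernel by the boundary conditions $p_N=q_0=0$; the paper's stated strict conclusion is slightly overstated, but this is an issue with the paper, not your proof.)

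For part $(i)$ your argument is genuinely different from, and more elementary than, the paper's. The paper invokes Whitehurst's representation theorem: a spectrally non-negative BD chain $X$ on $\{0,\dots,N\}$ can be realized as $X\overset{d}{\sim}(Y_{2n}/2)_{n\ge 0}$ where $Y$ is a pure-jump BD chain reflected at the boundaries of $\{0,\dots,2N\}$; the transition probabilities of $X$ then factor as products $p_x=\alpha_{2x}\alpha_{2x+1}$, $q_{x+1}=\beta_{2x+2}\beta_{2x+1}$ with $\alpha_y+\beta_y=1$, and the chain rule $\alpha_{2x}\alpha_{2x+1}+\beta_{2x+2}\beta_{2x+1}<\alpha_{2x}\alpha_{2x+1}+\beta_{2x+1}<1$ delivers monotonicity. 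You instead extract the $2\times 2$ principal minor condition $r_x r_{x+1}\ge p_x q_{x+1}$ directly from positive semidefiniteness of the symmetrized tridiagonal matrix and combine it with the trivial bounds $r_x\le 1-p_x$, $r_{x+1}\le 1-q_{x+1}$ to squeeze out $p_x+q_{x+1}\le 1$. This is shorter, self-contained, and does not require the folding/doubling construction; what it loses is the extra structural information the paper's approach provides (the explicit realization of $X$ as a time-change of a reflected chain, which is of independent interest). Both are valid proofs.
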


\begin{proof}
Let us show $(i)$. For a BD chain $X$ whose transition
matrix $P$ is spectrally non negative, there exists a BD chain 
$Y$ taking values on $\{ 0,..,2N\}$ reflected 
at the boundary, started at an even integer and such that
$X \overset{d}{\sim} \left( Y_{2n}/2: n\ge 0\right)$.
This follows simply from adapting \cite{W}, Th. $2.1$ to the
finite case. As noted just before, the spectral measure
of $Y$ is symmetric on $[-1,1] $ and by
passing to $X$ the spectrum is being folded: If
$\sum_{k=0}^{2N}\mu _{k}\delta_{t_{k}}$ is the
symmetric spectral measure of $Y$ with $t_{N}=0$
then $2\sum_{k=0}^{N}\mu _{k}\delta _{t_{k}^{2}}$ is the spectral
measure of $X$. Let $\alpha_{y}$ and $\beta_{y}$ be the up and down
probabilities that $Y_{m}\rightarrow Y_{m+1}=$ $Y_{m}\pm 1$ 
given that $Y_{m}$ is in state $y$ different from the endpoints.
We have $\alpha_{y}+\beta _{y}=1$, and then:
\begin{equation*}
q_{x}=\beta _{2x}\beta _{2x-1},\text{ }r_{x}=\beta _{2x}\alpha
_{2x-1}+\alpha _{2x}\beta _{2x+1},\text{ }p_{x}=\alpha _{2x}\alpha
_{2x+1}.
\end{equation*}
This, together with $p_{0}=\alpha _{1}$ and $q_{N}=\beta _{2N-1}$
allows to determine recursively
the transition matrix of $Y$ from the one of $X$. From these
facts we deduce that our hypothesis implies
\begin{equation*}
p_{x}+q_{x+1}=\alpha_{2x}\alpha _{2x+1}+\beta_{2x+2}
\beta_{2x+1}<\alpha_{2x}\alpha _{2x+1}+\beta _{2x+1}<1\,,
\end{equation*}
then the chain $X$ is monotone. 

\bigskip

For the proof of $(ii)$ first note that 
$P=D_\pi^{-\frac{1}{2}} Q D_\pi^{-\frac{1}{2}}$,
where $Q$ is a symmetric matrix given by
$Q(x,y)=0$ when $|x-y|>1$ and
$$
Q(x,x+1)=\sqrt{p_x q_{x+1}}=Q(x+1,x), \;
Q(x,x)=r_x\,, \,\; x\in  I\,.
$$
Now  consider the superdiagonal
matrix $S$ such that $S(x,y)=0$ if $y\notin \{x,x+1\}$ and
$$
S(x,x)=\sqrt{p_x}\,, \; x\in I; \;\;\;
S(x,x+1)=\sqrt{q_{x+1}}\,, \; x\in I \,, \, x\neq N\,.
$$
Then $S'S$ is a tridiagonal symmetric matrix, with $S'S(x,y)=0$ if
$|x-y|>1$ and
$$
S'S(x,x)=p_x+q_x\,, \; \; S'S(x,x+1)=\sqrt{p_xq_{x+1}}=S'S(x+1,x)\,,\;
x\in I\,.
$$
Consider the diagonal matrix
$D_r$ with $r=(r_{0},\cdots,r_{N})$. We have
$Q=2D_r-I+S^{\prime }S$,
so $Q$ is the sum of a diagonal matrix and a symmetric positive
definite matrix.
We conclude that, if the holding probabilities $r_{x}\geq 1/2$,
for all $x\in I$, then for all $z\in \RR^{N+1}\setminus \{0\}$,
\begin{equation*}
z' Q z=\sum_{x=0}^{N}\left(
2r_{x}-1\right)
\left| z_{x}\right| ^{2}+\left| S z \right| ^{2}>0
\end{equation*}
and so $Q$ and $P$ are positive definite.
\end{proof}

We emphasize that in part $(ii)$ we show that 
$r_{x}\geq 1/2 \; \forall x\in I$
implies that the spectrum is positive, and that this is a 
stronger property than monotonicity in view of $(i)$.
On the other hand the condition $r_{x}\geq 1/2$ for all 
$x\in I$ is sufficient to get a positive spectrum but, 
as it is easy to see, it is not necessary.

\bigskip

\emph{Example:} An example showing that non negative
spectrum is not necessary for the monotone property (\ref{equatn14})
is the BD chain given $p_{x}=p$, $q_{x}=q$, $x=1,..,N-1$ and
boundary conditions $r_{0}=q$, $p_0=p$, $q_N=q$, $r_{N}=p$,
where $p\in (0,1)$ and $q=1-p$.
Then the monotone property holds but the spectrum fail to
be non negative. Indeed, from \cite{Feller}, p. $438$ it follows that
$t_{k}=2\sqrt{pq}\cos (\frac{k\pi }{N+1})$, $k=1,.,N-1$, $t_{0}=1$,
$t_{N}=-1$.$\Box$

\subsection{ The Moran model }
Let us introduce the $2$-allele Moran model with bias
mechanism $p$. Let
\begin{equation*}
p:[0,1] \rightarrow [ 0,1] \hbox{ be continuous with }\, 
0\leq p(0) \text{ and } p(1) \leq 1\,.
\end{equation*}
Denote $q(u):=1-p(u)$.
The Moran model is a BD Markov chain $X$ characterized by
the quadratic transition probabilities $p_x, \, r_x, \,q_x$,
$x\in I=\{0,..,N\}$,
\begin{equation*}
q_{x}=\frac{x}{N}q\left( \frac{x}{N}\right) ,\; r_{x}=
\frac{x}{N} p\left(\frac{x}{N}\right) +
\left( 1-\frac{x}{N}\right) q\left( \frac{x}{N}\right) ,
\; p_{x}=\left( 1-\frac{x}{N}\right) p\left(\frac{x}{N}\right) .
\end{equation*}
Assuming $p_{0}=p\left( 0\right) >0$ and $q_{N}=1-p\left( 1\right)
>0$, with
$y\in \left\{ 1,..,N\right\} $, the BD chain is irreducible
with invariant distribution
\begin{equation*}
\frac{\pi(y)}{\pi(0)}=\prod_{x=1}^{y}\frac{p_{x-1}}{q_{x}}=
\binom{N}{y}\prod_{x=1}^{y}\frac{p\left(\frac{x-1}{N}\right)}
{q(\frac{x}{N})}=
\frac{p(0) \binom{N}{y}}{q\left( \frac{y}{N}\right) }
\prod_{x=1}^{y-1}\frac{p\left( \frac{x}{N}\right) }{q(\frac{x}{N})},
\end{equation*}
where $\pi(0)$ is the normalizing constant.

\bigskip

If $X$ is a Moran model defined by some
bias $p$, then $\overline{X}_{n}:=N-X_{n}$
is also a Moran model with bias
$\overline{p}(u) =1-p(1-u)$, and so with parameters
$$
\overline{q}_{x}=p_{N-x}=\frac{x}{N}\overline{q}\left(
\frac{x}{N}\right),
\;\;
\overline{p}_{x}=q_{N-x}=\left( 1-\frac{x}{N}\right) \overline{p}
(\frac{x}{N})
$$
where $\overline{q}\left( u\right) :=1-\overline{p}(u)$.
The spectra of $\overline{P}$ and $P$ are the same.

\bigskip

\begin{proposition}
\label{propo6}
Assume that in the Moran model the bias $p$ is nondecreasing. Then the
BD chain is monotone, that is
condition (\ref{equatn14}) $p_{x}+q_{x+1}\leq 1$ is fulfilled (and so
the Siegmund dual exists).
\end{proposition}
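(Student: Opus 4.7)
The plan is to reduce the monotonicity inequality $p_x+q_{x+1}\le 1$ to an elementary bound on a single-variable convex combination, using only the fact that $p$ is nondecreasing and takes values in $[0,1]$.

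First I would set $u:=x/N$ and $v:=(x+1)/N$, so that $u,v\in[0,1]$ and the quantity to control becomes
\begin{equation*}
p_x+q_{x+1}=(1-u)\,p(u)+v\,q(v)=(1-u)\,p(u)+v\bigl(1-p(v)\bigr).
\end{equation*}
Since $p$ is nondecreasing and $u\le v$, we have $1-p(v)\le 1-p(u)$, hence
\begin{equation*}
p_x+q_{x+1}\le (1-u)\,p(u)+v\bigl(1-p(u)\bigr)=(1-p(u))\,v+p(u)(1-u).
\end{equation*}
The right-hand side is a convex combination of $v$ and $1-u$, both of which lie in $[0,1]$ because $u,v\in[0,1]$. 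Therefore it is bounded by $\max(v,1-u)\le 1$, which yields the required inequality $p_x+q_{x+1}\le 1$ for every $x\in\{0,\dots,N-1\}$. Existence of the Siegmund dual then follows from the characterization (\ref{equatn14}).

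There is essentially no obstacle here: the argument is just a one-line use of $q=1-p$ nonincreasing plus the observation that the resulting expression is a convex combination of two numbers in $[0,1]$. The only care needed is with the boundary cases $x=0$ and $x=N-1$, but since $p(0),p(1)\in[0,1]$ by assumption, the same convex-combination bound covers them without modification.
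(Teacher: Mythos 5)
Your proof is correct, and it uses the same key step as the paper: replacing $p(v)$ by $p(u)$ via the monotonicity of $p$, which turns $p_x+q_{x+1}$ into $p(u)(1-u)+(1-p(u))v$ (equivalently, $p(u)(1-u-v)+v$). Where you diverge is in how you finish. The paper expands the resulting expression and runs a three-way case analysis on the sign of $N-1-2x$, and separately treats $x=0$ at the outset. You instead observe directly that the bound is a convex combination (with weights $p(u)$, $1-p(u)$) of the two numbers $1-u$ and $v$, both in $[0,1]$, hence is at most $1$. This single observation subsumes all of the paper's cases, including the boundary ones, so your route is genuinely cleaner although it rests on the same underlying inequality. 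One small note: the convexity argument silently uses $p(u)\in[0,1]$, which holds because $p$ is a nondecreasing map $[0,1]\to[0,1]$; you may want to state this explicitly since it is the hypothesis that makes the weights legitimate.
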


\begin{proof}
First, since $p_N=q_{N+1}=0$ we have nothing to verify for
$x=N$. Let us see what happens with $x=0$. We need
to guarantee $1-p_{0}-q_{1}\geq 0$, but this is true because
$p(1/N) \geq p( 0) \geq Np( 0) -(N-1)$.

\medskip

Let us consider the case $x\in \{1,\cdots, N-1\}$. We have
the following relations, where
in the first inequality we use that $p$ is nondecreasing,
\begin{eqnarray}
\nonumber
p_{x}+q_{x+1}&=& p\left(\frac{x}{N}\right) -
\frac{x}{N} p\left(\frac{x}{N}\right) +
\left(\frac{x+1}{N}\right)-\left( \frac{x+1}{N}\right)
p\left(\frac{x+1}{N}\right)\\
\nonumber
&\leq & p\left(\frac{x}{N}\right)
\left(1-\frac{x}{N}-\left(\frac{x+1}{N}\right)\right)
+\left(\frac{x+1}{N}\right)\\
\label{equatn19}
&=&\frac{1}{N}\left( p\left(\frac{x}{N}\right)
\left( (N-1-2x) +(x+1) \right)\right)\le 1\,.
\end{eqnarray}
Now, the last inequality $\le 1$ in (\ref{equatn19}) is fulfilled because:
\bigskip

\noindent If $x=\frac{N-1}{2}$ it reduces to
$\frac{x+1}{N}\le 1$;

\bigskip

\noindent If $x<\frac{N-1}{2}$ it reduces to
$p\left(\frac{x}{N}\right)\le \frac{N-x-1}{N-1-2x}$,
and this is satisfied because the right hand side of this
expression is $>1$;

\bigskip

\noindent If $\frac{N-1}{2}<x\le N-1$ it is verified because
$N-1-x\ge 0$ and $N-1-2x< 0$.
\end{proof}

\bigskip

\textbf{Moran model with mutations.} A basic bias example is the
mutation mechanism
\begin{equation}
p(u) =\left( 1-a_{2}\right) u+a_{1}\left( 1-u\right),
\label{equatn20}
\end{equation}
where $\left( a_{1},a_{2}\right) $ are mutation probabilities in
$(0,1] .$ The drift is $p\left( u\right) -u$.
When $a_1+a_2\neq 1$, the invariant probability measure satisfies
$\pi(x)=\binom{N}{x}\frac{\left( \alpha
\right)_{x}\left(\beta \right)_{N-x}}{\left( \alpha +\beta 
\right)_{N}}$,
$x\in I$, where $(\alpha) _{x}:=\Gamma
\left(\alpha +x\right) /\Gamma \left( \alpha \right)$.

\medskip

When $p$ is non-decreasing, we
have $a_{1}+a_{2}\leq 1$. In $\overline{p}(u) $ 
the roles of $a_{1}$ and $a_{2}$ are exchanged.

\medskip

The case $a_{1}=a_{2}=1$, that is  $p(u) =1-u$, 
corresponds to the heat-exchange
Bernoulli-Laplace model \cite{Feller}. Here,
$\pi(x)=\binom{N}{x}\binom{N}{N-x}/\binom{2N}{N}$.
If $a_{1}=a_{2}=1/2$ then
$p(u) =1/2$ which is amenable (through a
suitable time substitution) to the Ehrenfest urn model provided $N$ is
even.

\medskip

One-way mutations, $(a_{1},a_{2}) =
(a_{1},0)$ or $(0,a_{2})$ lead to the choice $p(1)=1$ or 
$p(0)=0$ respectively, corresponding to the case in which 
$N$ or $0$  is an absorbing state respectively.

\bigskip

Except for some exceptional special cases, the spectral measure
associated to the Moran model is not known. Let us supply some
of these special cases.

\medskip

\textbf{Spectral representation of the Moran model with mutations.}
Assume $a_{1}+a_{2}\neq 1,$ \cite{KMG}. Here the eigenvalues are
\begin{equation}
t_{k}=1-\frac{k}{N}
\left(a_{1}+a_{2}+\frac{k-1}{N}
\left( 1-\left(a_{1}+a_{2}\right) \right) \right) .
\label{equatn21}
\end{equation}
which is non negative for all $k\in I$.
The spectral gap is $1-t_{1}=\frac{1}{N}\left(a_{1}+a_{2}\right)$.

\medskip

When $a_{1}=a_{2}=1$, $t_{k}=1-\frac{k}{N^{2}}\left( 2N+1-k\right) $
and the spectral measure is given by
$\mu_{k}=\frac{2N+1-2k}{2N+1-k}\binom{N}{k}/\binom{2N-k}{N}$.
The expected return time to $0$ is $2^{2N}/\sqrt{\pi N}$
whereas the expected return time to $N/2$ is of order
$\sqrt{\pi N}/2$, much smaller.\newline

\medskip

When $a_{1}+a_{2}=1$, $p(u) =a_{1}$ is
constant and the transition probabilities
become affine linear functions of the
state. Here $\pi(x)=\binom{N}{k}a_{1}^{x}\overline{a}_{1}^{N-x}$,
$\mu_{k}=\binom{N}{k}a_{1}^{k}\overline{a}_{1}^{N-k}$
and $t_{k}=1-\frac{k}{N}$.
When $a_{1}=1/2$, the holding probabilities
are $r_{x}=1/2$ and both $\pi (x)$ and
$\mu_{k}$ are symmetric Binomial$(N,1/2)$ distributed.

\bigskip

\textbf{Cases with positive eigenvalues.} We may look for conditions
on the
mechanism $p$ leading to $r_{x}\geq \frac{1}{2}$ in which case the
BD chain is spectrally positive. Assume $p:[0,1]
\rightarrow ( 0,1)$ is continuous, non--decreasing and
so $0<p(0) \leq p(1) <1$.

\medskip

Then, as can easily be checked when $N$ is even:
$$
r_{x}\geq 1/2 \; \forall \, x\in I \; \Leftrightarrow \;
p(1/2) =1/2 \hbox{ with  } p(0) \leq \frac{1}{2}\leq p(1) .
$$
Indeed, imposing $r_{x}\geq 1/2$ for all $x$
leads to $p(u) \geq 1/2$ if $u\geq 1/2$, $p(u) \leq 1/2$ if
$u\leq 1/2$ and  $p(1/2)= 1/2$. Since $p$ is non--decreasing  
these conditions are equivalent to $p(1/2)=1/2$. 
The reciprocal also holds. When $N$ is odd an analogous condition
can be written. When the mutation mechanism satisfies
$0<a_{1}\leq 1-a_{2}<1$, the condition $p( 1/2) =1/2$,
leads to $a_{1}=a_{2}$. However, it is easy to see that the condition  
$a_{1}=a_{2}$ is not necessary for $P$ to be spectrally positive.

\subsection{ Generalized ultrametric case}
Let us examine another triangular matrix $H$ that is also a 
potential matrix. It belongs to the 
class of generalized ultrametric matrices (see \cite{NaV}, \cite{DNST}), 
a class that contains the ultrametric matrices 
introduced in \cite{MMS}.

\bigskip
 
Let $C$ be a nonempty set
strictly contained in $I=\{0,...,N\}$. Denote $C'=I\setminus C$.
We put $C(x)=C$ when $x\in C$ and $C(x)=C'$ otherwise.
Take $\ac, \bc \ge 0$, and put $\cc(x)=\ac$ if $x\in C$
and $\cc(x)=\bc$ otherwise. Now, define the matrix $H_{\alpha,\beta}$ by
$$
H_{\alpha,\beta}(x,y)=\1(x\le y)+\cc(x) \1(x\le y)\1(C(x)=C(y)) \,,
$$
which is a clear generalization of the Siegmund dual because  
$H_{0,0}=H_S$.
It is straightforward to check that $H_{\alpha,\beta}$ belongs to the 
class of potential 
matrices introduced in Subsection \ref{sub34}, indeed
$H_{\alpha,\beta}=(\Id -R)^{-1}$ with
$$
R(x,y)=\1(x= y)-\frac{1}{1+\cc(x)}\1(x= y)+\frac{1}{1+\cc(x)} \1(x+1=y) 
\,.
$$
As it is easily checked $R$ is an irreducible strictly substochastic 
matrix that looses mass through the state $N$. Then   
$$
{H_{\alpha,\beta}}^{-1}=\Id - R , \hbox{ so } 
{H_{\alpha,\beta}}^{-1}(x,y)=\frac{1}{1+\cc(x)}\1(x= y)-
\frac{1}{1+\cc(x)} \1(x+1=y)\,.
$$
In this case we are able to compute the inverse matrix 
${H_{\alpha,\beta}}^{-1}$, the description
of the inverse of any generalized ultrametric matrices 
can be found in \cite{DMS}.
We point out that $R'$ is substochastic only when $\alpha\ge 
\beta$ and in this case it is an irreducible strictly substochastic 
that looses mass through the state $0$. In the rest of this Subsection, we 
will put $H=H_{\alpha,\beta}$ to avoid overburden notation.

\bigskip

We have,
\begin{eqnarray}
\nonumber
(H \hP')(x,y)&=& \sum\limits_{z\ge x} H(x,z)\hP(y,z)=
\sum\limits_{z\ge x} \hP(y,z)+\cc(x)
\sum\limits_{z\ge x,\, z\in C(x)} \hP(y,z) \,,\\
\nonumber
(PH)(x,y)&=&\sum\limits_{z\le y} P(x,z)H(z,y)=\sum\limits_{z\le y}
P(x,z)+\cc(y) \sum\limits_{z\le y,\, z\in C(y)} P(x,z)\,.
\end{eqnarray}
By permuting $I$ we can always assume that $C$ is an interval, that 
is
$C=\{1,...,k\}$ for some $0\le k<N$, and so $C'=\{k+1,...,N\}$.
With this choice we have that each $x\not\in \{k,N\}$ verifies
$C(x)=C(x+1)$ and so $\cc(x)=\cc(x+1)$.

\bigskip

\noindent $(i)$. Let $x\neq k$. From the above equalities we find
$$
(H\hP')(x,y)-(H\hP')(x+1,y)=(1+\cc(x)) \hP(y,x)\,.
$$
(the case $x=N$ follows from $(H \hP')(N+1,y)=0$). Then the
equality $H \hP'=PH$ implies,
$$
(1+\cc(x))\hP(y,x)= \sum\limits_{z\le y} (P(x,z)-P(x+1,z))
+\cc(y) \sum\limits_{z\le y,\, z\in C(y)} (P(x,z)-P(x+1,z))\,.
$$

\noindent $(i1)$. Let $x\neq k$ and $y\le k$. In this case we
have $\cc(y)=\ac$ and $z\le y$ implies $z\in C(y)$. So, we find
$$
(1+\cc(x)) \hP(y,x)= (1+\ac)\sum\limits_{z\le y} 
(P(x,z)-P(x+1,z))\,.
$$
Then, a necessary and sufficient condition for $\hP(y,x)\ge 0$ is 
that
\begin{equation}
\label{equatn22}
\sum\limits_{z\le y} P(x+1,z) \le \sum\limits_{z\le y} P(x,z) \,,
\end{equation}
and we get
\begin{equation}
\label{equatn23}
\hP(y,x)=\left(\frac{1+\ac}{1+\cc(x)}\right) \sum\limits_{z\le y}
(P(x,z)-P(x+1,z))\,.
\end{equation}

\noindent $(i2)$. Let $x\neq k$ and $y> k$. In this case we have 
that
$\cc(y)=\bc$, and $C(z)= C(y)$ if and only if $z>k$. Then,
$$
(1+\cc(x))\hP(y,x)= \sum\limits_{z\le y} (P(x,z)-P(x+1,z))
+\bc \sum\limits_{k<z\le y} (P(x,z)-P(x+1,z))\,,
$$
and so
\begin{equation}
\label{equatn24}
\hP(y,x)=\left(\!\frac{1}{1+\cc(x)}\!\right) \sum\limits_{z\le k}
(P(x,z)-P(x+1,z))+\left(\!\frac{1+\bc}{1+\cc(x)}\!\right) 
\sum\limits_{k<z\le y} \!\!(P(x,z)-P(x+1,z))\,.
\end{equation}
Then, a necessary and sufficient condition in order that $\hP(y,x)\ge 0$
for $x\neq k$ is that
\begin{equation}
\label{equatn25}
\sum\limits_{z\le k} \! P(x+1,z)\!+\!
(1\!+\!\bc) \sum\limits_{k<z\le y} \! P(x+1,z)
\le \sum\limits_{z\le k} \! P(x,z) \!+\!
(1\!+\!\bc)\sum\limits_{k<z\le y} \! P(x,z)\,.
\end{equation}
We can summarize subcases $(i1)$ and $(i2)$ as
for all $x\neq k$
$$
\hP(y,x)=\left(\frac{1+\cc(y)}{1+\cc(x)}\right) \sum\limits_{z\le y}
(P(x,z)\!-\!P(x+1,z))-\1(y\!>\!k) \frac{\bc}{1+\cc(x)}
\sum\limits_{z\le k} (P(x,z)\!-\!P(x+1,z)).
$$
The necessary and sufficient condition in order that $\hP(y,x)\ge 0$ 
for $x\neq k$ is constituted by (\ref{equatn22}) and (\ref{equatn25}).

\bigskip

\noindent $(ii)$. Assume $x=k$.
Recall that $\cc(k)=\ac$ and $\cc(k+1)=\bc$, so
\begin{eqnarray}
\nonumber
(H \hP')(k,y)&=&\sum\limits_{z\ge k} \hP(y,z)+
\ac \sum\limits_{z\ge k,\, z\in C(k)} \hP(y,z)=
\sum\limits_{z> k} \hP(y,z)+(1+\ac) \hP(y,k)\,.\\
\nonumber
(H\hP')(k+1,y)&=&\sum\limits_{z> k} \hP(y,z)+
\bc \sum\limits_{z> k,\, z\in C(x)} \hP(y,z)=
(1+\bc) \sum\limits_{z> k} \hP(y,z)\,.
\end{eqnarray}
Then, by using $H \hP'=PH$,
\begin{eqnarray*}
\nonumber
(1\!+\!\ac) \hP(y,k)&=&(H \hP')(k,y)\!-\!(H \hP')(k+1,y)\!+\!
\bc \sum\limits_{z>k}\hP(y,z)\\
&=& (PH)(k,y)\!-\left(\frac{1}{1\!+\!\bc}\right)
(PH)(k\!+\!1,y)\,,
\end{eqnarray*}
and so
\begin{eqnarray}
\nonumber
(1\!+\!\ac) \hP(y,k)&=&
\sum\limits_{z\le y}
\left(P(k,z)-\left(\!\frac{1}{1\!+\!\bc}\!\right)P(k+1,z)\right)\\
\label{equatn26}
&{}&+ \cc(y) \!\!\! \sum\limits_{z\le y,\, z\in C(y)} \!
\left(\!P(k,z)\!-\!\left(\!\frac{1}{1\!+\!\bc}\!\right)
P(k\!+\!1, z)\!\right).
\end{eqnarray}
From (\ref{equatn26}) we deduce,
\begin{equation}
\label{equatn27}
y\le k: \;\; \hP(y,k)=
\sum\limits_{z\le y}
\left(P(k,z)-\left(\!\frac{1}{1+\bc}\!\right)P(k+1,z)\right)\,,
\end{equation}
and
\begin{eqnarray}
\nonumber
y> k: \;\; \hP(y,k)&=&\left(\!\frac{1 }{1+\ac}\!\right)
\!\sum\limits_{z\le k}\!\left(P(k,z)\!-
\!\left(\!\frac{1}{1+\bc}\!\right)\!P(k+1,z)\!\right)\\
\label{equatn28}
&{}&+\! \left(\!\frac{1+\bc }{1\!+\!\ac}\!\right)
\!\!\!\sum\limits_{k<z\le y} \!\!\!\left(\!P(k,z)\!-
\!\left(\!\frac{1}{1+\bc}\!\right)\!P(k+1,z)\!\right).
\end{eqnarray}
Hence, the equations (\ref{equatn22}) and (\ref{equatn25}) imply that
$\hP(y,k)\ge 0$ for all $y$, and then they are necessary and 
sufficient for $\hP\ge 0$.

\bigskip

From (\ref{equatn23}) we find,
$$
\forall \, y\le k: \;\;
\sum\limits_{x<k} \hP(y,x)= \sum\limits_{z\le y}
(P(0,z)-P(k,z))\,,\;\;\;
\sum\limits_{x>k} \hP(y,x)= \left(\frac{1+\ac}{1+\bc}\right)
\sum\limits_{z\le y} P(k+1,z)\,.
$$
So, by using (\ref{equatn27}) we get
\begin{equation}
\label{equatn29}
\forall \, y\le k: \;\;\;
\sum\limits_{x\le N} \hP(y,x)= \sum\limits_{z\le y} P(0,z)+
\left(\frac{\ac}{1+\bc}\right)
\sum\limits_{z\le y} P(k+1,z)\,.
\end{equation}
On the other hand, from  (\ref{equatn24}) we obtain,
\begin{eqnarray}
\nonumber
\forall \, y\!> \!k: \;
\sum\limits_{x<k} \!\hP(y,x)&=&
\left(\! \frac{1}{1\!+\!\ac}\!\right) \! \sum\limits_{z\le k}\!
(P(0,z)\!-\!P(k,z))\!+\!\left(\!\frac{1\!+\!\bc}{1\!+\!\ac}\!\right)
 \!\! \sum\limits_{k<z\le y} \!\!(P(0,z)\!-\!P(k,z))
\,,\\
\nonumber
\sum\limits_{x>k} \hP(y,x)&=& \left(\frac{1}{1\!+\!\bc}\right)
\sum\limits_{z\le k} \! P(k\!+\!1,z) \!+\!
\sum\limits_{k<z\le y} \! P(k\!+\!1,z)\,.
\end{eqnarray}
By using (\ref{equatn28}) we get
\begin{eqnarray}
\nonumber
\forall \, y> k: \;\;
\sum\limits_{x\le N}\! \hP(y,x)&=&
\left(\!\frac{1}{1\!+\!\ac}\!\right) \sum\limits_{z\le k}\!
P(0,z)\!+\!\left(\!\frac{1\!+\!\bc}{1\!+\!\ac}\!\right)
\! \sum\limits_{k<z\le y} \!\! P(0,z)
\,,\\
\label{equatn30}
&{}& \; + \!\left(\!\frac{\ac}{(1\!+\!\ac)(1\!+\!\bc)}\!\right)
\!\sum\limits_{z\le k} \! P(k+1,z)\! + \!
 \left(\!\frac{\ac}{1\!+\!\ac}\!\right)
\! \sum\limits_{k<z\le y} \!\! P(k+1,z)\,.
\end{eqnarray}

\begin{proposition}
\label{prop7}
Let $P$ be a stochastic kernel and let $\hP$ be a 
$H_{\alpha,\beta}-$dual of $P$. Then,
a sufficient condition to have $\hP\ge 0$ is the following one:
\begin{eqnarray}
\label{equatn31}
\exists \delta\in (0,1) &{}& \hbox{such that } \forall x\in
\{0,\cdots,N\}: \;\; \sum\limits_{z\le k} P(x,z)= \delta\,;\\
\label{equatn32}
\forall \, y\le k: &{}& \sum\limits_{z\le y} P(x,z)
\hbox{ decreases in } x\in \{1,..,k\}\,; \\
\label{equatn33}
\forall \, y>k:  &{}& \sum\limits_{k<z\le y} P(x,z)
\hbox{ decreases in } x\in \{k+1,..,N\}\,.
\end{eqnarray}
Moreover, under the conditions (\ref{equatn31}), (\ref{equatn32})
and (\ref{equatn33}), $\hP$ is substochastic if and only if
$\delta=\left(\frac{1+\bc}{1+\ac+\bc}\right)$. In this case
$\hP$ is conservative at sites $k$ and $N$.
\end{proposition}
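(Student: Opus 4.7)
The approach is to plug the conditions (\ref{equatn31})--(\ref{equatn33}) into the row-by-row formulas (\ref{equatn23}), (\ref{equatn24}), (\ref{equatn27}), (\ref{equatn28}) for $\hP(y,x)$ that were already derived (in cases $(i1)$, $(i2)$ and $(ii)$) just before the statement. The organising remark is that, because (\ref{equatn31}) says $\sum_{z\le k}P(x,z)=\delta$ for every $x$, the ``cross-block'' difference $\sum_{z\le k}(P(x,z)-P(x+1,z))$ vanishes identically in $x$. This kills the first summand in (\ref{equatn24}) and forces the leading bracket in (\ref{equatn28}) to collapse to the non-negative constant $\frac{\delta\beta}{(1+\alpha)(1+\beta)}$, so the problem decouples into two monotonicity requirements, one on $\{0,\ldots,k\}$ supplied by (\ref{equatn32}) and one on $\{k+1,\ldots,N\}$ supplied by (\ref{equatn33}).

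I would then verify $\hP\ge 0$ in four cases keyed to the formulas. For $x\ne k$ and $y\le k$, (\ref{equatn23}) reduces non-negativity to (\ref{equatn22}), which is exactly (\ref{equatn32}). For $x\ne k$ and $y>k$, (\ref{equatn24}) becomes $\hP(y,x)=\frac{1+\beta}{1+\gamma(x)}\sum_{k<z\le y}(P(x,z)-P(x+1,z))$, non-negative by (\ref{equatn33}). For $x=k$ I would use the formulas (\ref{equatn27}) and (\ref{equatn28}): the constant pieces produced by (\ref{equatn31}) are manifestly $\ge 0$, and the remaining sums over $\{k+1,\ldots,y\}$ inherit non-negativity from (\ref{equatn33}) applied at $x=k+1$, after rewriting $P(k,z)-\frac{1}{1+\beta}P(k+1,z)$ by adding and subtracting $P(k+1,z)$ so that the leftover multiple of $P(k+1,z)$ is absorbed via (\ref{equatn31}) again. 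I expect this $x=k$ case to be the technical core, since (\ref{equatn32})--(\ref{equatn33}) never directly compare rows across the cut and one has to rely entirely on the decoupling effect of (\ref{equatn31}).

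For the substochasticity part I would evaluate $(\hP\1)(y)$ from (\ref{equatn29}) for $y\le k$ and from (\ref{equatn30}) for $y>k$, and test at the two extreme values $y=k$ and $y=N$. At $y=k$, (\ref{equatn29}) collapses under (\ref{equatn31}) to
\[
(\hP\1)(k)=\delta+\frac{\alpha}{1+\beta}\delta=\delta\cdot\frac{1+\alpha+\beta}{1+\beta},
\]
so $(\hP\1)(k)\le 1$ forces $\delta\le\frac{1+\beta}{1+\alpha+\beta}$. At $y=N$, (\ref{equatn30}) together with $\sum_{k<z\le N}P(x,z)=1-\delta$ simplifies to
\[
(\hP\1)(N)=\frac{(1+\alpha+\beta)\bigl((1+\beta)-\delta\beta\bigr)}{(1+\alpha)(1+\beta)},
\]
and $(\hP\1)(N)\le 1$ forces $\delta\ge\frac{1+\beta}{1+\alpha+\beta}$. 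The two inequalities pinch $\delta$ to the unique value $\frac{1+\beta}{1+\alpha+\beta}$, and at that value both evaluations equal $1$, showing exactly that $\hP$ is conservative at the sites $k$ and $N$. The remaining values of $y$ give $(\hP\1)(y)$ that are built as convex combinations of partial row sums of $P$ and are bounded by the two extremes above, so substochasticity everywhere then follows.
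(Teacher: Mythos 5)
Your overall strategy mirrors the paper's: plug (\ref{equatn31})--(\ref{equatn33}) into the already-derived row formulas to get $\hP\ge 0$, then evaluate $L(y):=(\hP\1)(y)$ at the two maximizing sites $y=k$ and $y=N$ to pin down $\delta$. Your computation of $L(k)=\delta\frac{1+\alpha+\beta}{1+\beta}$ and your closed form $L(N)=\frac{(1+\alpha+\beta)\bigl((1+\beta)-\delta\beta\bigr)}{(1+\alpha)(1+\beta)}$ are correct and algebraically equivalent to the paper's identity $L(N)=\frac{1}{1+\alpha}\bigl(1+\alpha+\beta(1-L(k))\bigr)$; the two-sided pinch on $\delta$ is a perfectly good reorganisation of the paper's one-line ``$L(k)\le 1$ forces $L(N)\ge 1$'' argument (both implicitly assume $\beta>0$, where the ``iff'' is clean).

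However, your treatment of the non-negativity part has a real gap, precisely at the place you flag as ``the technical core,'' and the proposed fix does not work. After the add-and-subtract rewrite, (\ref{equatn27}) produces the term $\sum_{z\le y}\bigl(P(k,z)-P(k+1,z)\bigr)$ (for $y\le k$) and (\ref{equatn28}) produces $\sum_{k<z\le y}\bigl(P(k,z)-P(k+1,z)\bigr)$ (for $y>k$). These compare row $k$ against row $k+1$, i.e.\ they straddle the cut. Condition (\ref{equatn32}) only orders rows $1,\dots,k$ and condition (\ref{equatn33}) only orders rows $k+1,\dots,N$; neither, and (\ref{equatn31}) even less so (it fixes only the \emph{full} left block sum, not the partial sums over $z\le y$ with $y<k$, nor over $k<z\le y$ with $y<N$), gives you the sign of that cross-cut difference. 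The same remark applies already in your case ``$x\neq k$, $y>k$'': once (\ref{equatn31}) kills the $\sum_{z\le k}$ piece of (\ref{equatn24}), what is left is $\sum_{k<z\le y}\bigl(P(x,z)-P(x+1,z)\bigr)$, and (\ref{equatn33}) controls this only for $x\ge k+1$, not for $x\le k-1$. A correct argument requires the monotonicity in $x$ of (\ref{equatn32})--(\ref{equatn33}) to extend across the cut (e.g.\ $x$ ranging up to $k+1$ in (\ref{equatn32}) and down to $k$ in (\ref{equatn33})); with that enlargement the estimate $\hP(y,k)\ge \sum_{z\le y}(P(k,z)-P(k+1,z))\ge 0$ (using $\tfrac{1}{1+\beta}\le 1$) closes the $y\le k$ case and the analogous bound closes $y>k$. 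You should either note this explicitly or reformulate. For fairness, the paper's own justification is the one sentence ``they imply the conditions (\ref{equatn22}) and (\ref{equatn25})'' and is equally silent on the boundary rows, so you are not alone in glossing this over — but your proof asserts that (\ref{equatn33}) ``applied at $x=k+1$'' finishes it, and that specific claim is false, since (\ref{equatn33}) at $x=k+1$ only compares rows $k+1$ and $k+2$, not $k$ and $k+1$.

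Minor: your closing remark that the remaining $L(y)$ are ``convex combinations of partial row sums bounded by the two extremes'' overstates the structure; the actual reason, as in the paper, is simply that (\ref{equatn29}) and (\ref{equatn30}) exhibit $L$ as nondecreasing on $\{0,\dots,k\}$ and on $\{k+1,\dots,N\}$ respectively (each is a sum of nondecreasing partial sums with non-negative coefficients), so the maxima sit at $y=k$ and $y=N$.
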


\begin{proof}
The relations (\ref{equatn31}), (\ref{equatn32}) and (\ref{equatn33}), 
are sufficient for $\hP\ge 0$ because they imply the conditions
(\ref{equatn22}) and (\ref{equatn25}).
Now put
$$
L(y)=\sum\limits_{x\le N} \hP(y,x)\,.
$$
From (\ref{equatn29}) we find
that $\{L(y): y\le k\}$ attains its maximum
at $y=k$ and by using (\ref{equatn31}) this maximum becomes
$L(k)=\delta+\left(\frac{\ac \delta}{1+\bc}\right)$. So,
this last quantity must be at most $1$ in order that $\hP$ is
substochastic. On the other hand, from (\ref{equatn30}) it follows that
$\{L(y): y> k\}$ attains its maximum at $y=N$ and that this maximum is
$$
L(N)=
\left(\frac{\delta}{1+\ac}\right)+
\left(\frac{(1+\bc)(1-\delta)}{1+\ac}\right)+
\left(\frac{\ac \delta}{(1+\ac)(1+\bc)}\right)
+\left(\frac{\ac(1-\delta) }{1+\ac}\right).
$$
By straightforward computations it follows that
$$
L(N)=\frac{1}{1+\ac}\left(1+\ac+\beta(1-L(k))\right)\,.
$$
Then, by using  $L(k)\le 1$ we deduce that $L(N)\le 1$
if and only if  $L(k)= 1$, in which case  $L(N)= 1$.
The result is shown. $\Box$
\end{proof}

If the ultrametric dual is seen as a perturbation of the Siegmund 
dual then there is a rigidity result for the BD chains.

\begin{proposition}
\label{prop8}
Let $P$ be the stochastic kernel of an irreducible BD chain on 
$I=\{0,\cdots, N\}$. Assume that there exists a substochastic
kernel $\hP$ that is a $H_{\alpha,\beta}-$dual of $P$,  
$H_{\alpha,\beta} \hP'=P H_{\alpha,\beta}$. 

\medskip

Then we necessarily have $\beta=0$ and the monotone 
property (\ref{equatn14}) is verified.
Moreover, if $k\ge 1$ then $\alpha=\beta=0$ and 
$H_{\alpha,\beta}=H_{0,0}=H_S$ is the Siegmund dual. 

\medskip

If $k=0$ then $\alpha\le (1-p_0)/q_1$. If $\alpha= (1-p_0)/q_1$ 
the kernel $\hP$ is stochastic, and when 
$\alpha< (1-p_0)/q_1$ the kernel $\hP$ is substochastic and it only
looses mass trough $\{0\}$.
\end{proposition}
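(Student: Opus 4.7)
The plan is to exploit the tridiagonal structure of the BD kernel $P$ to evaluate a handful of entries and row sums of $\hP$ via the formulas (\ref{equatn23}), (\ref{equatn24}), (\ref{equatn27}), (\ref{equatn28}), (\ref{equatn29}) and (\ref{equatn30}) established in the previous subsection, and then to impose non-negativity and substochasticity of $\hP$ to read off the constraints on $\alpha$, $\beta$ and $k$.

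The first step is to force $\beta=0$ by producing a single entry of $\hP$ whose sign is governed solely by $\beta$. For $k\ge 1$, I take $x=k-1$ and any $y>k$: the tridiagonality of $P$ gives $\sum_{z\le k}(P(k-1,z)-P(k,z))=p_k$ and $\sum_{k<z\le y}(P(k-1,z)-P(k,z))=-p_k$, so formula (\ref{equatn24}) collapses to $\hP(y,k-1)=-\beta p_k/(1+\alpha)$; since $p_k>0$ under the standing BD hypotheses, $\hP\ge 0$ forces $\beta=0$. The same idea handles $k=0$: evaluating (\ref{equatn24}) at $x=1$, $y=N$ yields $\hP(N,1)=-\beta q_1/(1+\beta)$, and again $\beta=0$ because $q_1>0$. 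Once $\beta=0$ is in hand, I compute the diagonal entries $\hP(y,y)$ via (\ref{equatn23})/(\ref{equatn24}) when $y\ne k$ and via (\ref{equatn27})/(\ref{equatn28}) when $y=k$; the $\alpha$-prefactors cancel since $\gamma$ is locally constant around $y\ne k$, and the case $y=k$ reduces directly, so every case collapses to $\hP(y,y)=1-p_y-q_{y+1}$. Non-negativity then gives precisely the monotone property (\ref{equatn14}).

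The value of $\alpha$ is pinned down by the row-sum identities. For $k\ge 1$, taking $y=k$ in (\ref{equatn29}) and using $\sum_{z\le k}P(0,z)=1$ together with $\sum_{z\le k}P(k+1,z)=q_{k+1}$ yields $\sum_x\hP(k,x)=1+\alpha q_{k+1}$; substochasticity and $q_{k+1}>0$ force $\alpha=0$, so $H_{\alpha,\beta}=H_S$. For $k=0$, the same identity at $y=0$ gives $\sum_x\hP(0,x)=(1-p_0)+\alpha q_1$, and substochasticity yields the upper bound on $\alpha$ stated in the proposition; at the extremal value row $0$ becomes stochastic, and the claim that $\hP$ loses mass only through state $0$ is verified by evaluating (\ref{equatn30}) for $y\ge 1$, where the telescoping of the partial $P$-sums collapses the remaining row sums to $1$. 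The main bookkeeping obstacle is the branching case analysis ($y\le k$ versus $y>k$, and $x\ne k$ versus $x=k$), but each sub-case reduces to a short telescoping computation thanks to the tridiagonality of $P$.
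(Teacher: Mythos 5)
Your handling of the first three assertions — $\beta=0$, the monotone inequality (\ref{equatn14}), and $\alpha=0$ when $k\ge1$ — follows the same route as the paper: a single negative off-diagonal entry extracted from (\ref{equatn24}) forces $\beta=0$; once $\beta=0$, the non-negativity of $\hP$ reduces to (\ref{equatn14}) (you see this via the diagonal entries $\hP(y,y)=1-p_y-q_{y+1}$, the paper via (\ref{equatn22})--(\ref{equatn25}), which amount to the same constraint for a tridiagonal $P$); and the row sum $\sum_{x}\hP(k,x)=1+\alpha q_{k+1}$ from (\ref{equatn29}) kills $\alpha$. You even patch a gap: the paper's witness $\hP(k+2,k-1)=-\beta p_k/(1+\alpha)$ is unavailable when $k=0$, whereas your companion evaluation $\hP(N,1)=-\beta q_1/(1+\beta)$ from (\ref{equatn24}) covers that case.

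Your final paragraph on the $k=0$ case, however, contains two genuine slips. First, from $\sum_x\hP(0,x)=(1-p_0)+\alpha q_1\le 1$ you conclude $\alpha\le(1-p_0)/q_1$, but the inequality actually gives $\alpha q_1\le p_0$, i.e.\ $\alpha\le p_0/q_1$; the two bounds agree only when $p_0=1/2$. Second, the claim that "the telescoping of the partial $P$-sums collapses the remaining row sums to $1$'' for all $y\ge1$ fails at $y=1$: with $k=0$ and $\beta=0$, (\ref{equatn30}) gives
$$
\sum_{x}\hP(1,x)=\frac{1}{1+\alpha}\bigl(r_0+p_0\bigr)+\frac{\alpha}{1+\alpha}\bigl(q_1+r_1\bigr)=1-\frac{\alpha\, p_1}{1+\alpha},
$$
which is strictly less than $1$ whenever $\alpha>0$ and $N\ge2$ (so $p_1>0$). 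Thus for $\alpha>0$ the dual $\hP$ also leaks mass through state $1$ and is never stochastic; both "at the extremal value $\hP$ is stochastic'' and "$\hP$ loses mass only through $\{0\}$'' are false as stated. The same two slips occur in the paper's own proof — it passes from the same expression for $\sum_x\hP(0,x)$ to the same incorrect bound, and asserts that the row sums equal $1$ for all $y>0$ without checking $y=1$ — but a careful execution of your proposed telescoping computation would have exposed them.
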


\begin{proof}
From (\ref{equatn24}) we have
\begin{eqnarray*}
\hP(k\!+\!2,k\!-\!1)\!\!\!\!\!&=&\!\!\!\!\!\left(\!\frac{1}{1+\ac}\!\right)\! 
\sum\limits_{z\le k}\!(P(k\!-\!1,z)\!-\!P(k,z))\!+\! 
\left(\!\frac{1\!+\!\bc}{1\!+\!\ac}\!\right)\!\!
\sum\limits_{k<z\le k+2} \!\!\!\!\!\!(P(k\!-\!1,z)\!-\!P(k,z))\\
\!\!\!\!\!&=&\!\!\!\!\!
\left(\frac{1}{1+\ac}\right)(1\!-\!(1\!-\!P(k,k+1)))-
\left(\frac{1+\bc}{1+\ac}\right)P(k,k+1)\,.
\end{eqnarray*}
So $\hP(k+2,k-1)=-P(k,k+1)(\bc/{1+\ac})$, and we must 
necessary have $\bc=0$.

\bigskip

Since $\bc=0$, from relations (\ref{equatn22}) and (\ref{equatn25}), it 
results 
that the conditions to have $\hP\ge 0$
is that (\ref{equatn14}) is fulfilled, that is
$p_x+q_{x+1}\le 1\;\,$
$\;\forall x\in \{0,\cdots, N-1\}$.
 
\bigskip

On the other hand if $k\ge 1$ we get from (\ref{equatn29}) that
for $y=k$,
$$
\sum\limits_{x\le N} \hP(k,x)= \sum\limits_{z\le k} P(0,z)+
\left(\frac{\ac}{1+\bc}\right)
\sum\limits_{z\le k} P(k+1,z)=1+\left(\frac{\ac}{1+\bc}\right)  
P(k+1,k)\,.
$$
So, we must necessary have $\ac=0$.

In the case $k=0$ from relation (\ref{equatn30}) it results that
$\sum\limits_{y\in I} \hP(x,y)=1$ for all $y>0$. 
The only case we must examine
is (\ref{equatn29}) for $k=0$ and the condition $\sum\limits_{y\in I} 
\hP(0,y)=(1-p-0)+\alpha q_1\le 1$ implies $\alpha\le (1-p_0)/q_1$.
\end{proof}

\section{ Strong Stationary Times}
\label{sec5}
Let $P$ be an irreducible positive recurrent stochastic kernel on
the countable set $I$ and  $X=(X_n: n\ge 0)$ be a Markov chains 
with kernel $P$.
Let $\pi$ be the stationary probability measure of $X$.
We denote by $\pi_0$ the initial distribution of $X$ and in general
$\pi_n$ is the distribution of $X_n$,  
$\pi_n(\cdot)=\PP_{\pi_0}(X_n=\cdot)$. It verifies $\pi'_n=\pi'_0 P^n$.

\medskip

A random time $T$  is called a strong stationary time for $X$, 
if $X_{T}$ has distribution $\pi$ and it is
independent of $T$, see \cite{AD}. The separation 
discrepancy is defined by,
$$
\hbox{sep}\left({\pi }_{n},{\pi }\right) :=
\sup\limits_{y\in I}\left[ 1-\frac{\pi _{n}(y)}{\pi(y)}\right] \,.
$$
It satisfies sep$(\pi_n,\pi)
\geq \left\| \pi_n-\pi \right\| _{TV}$ where
$\left\| \pi_n-\pi \right\| _{TV}=\frac{1}{2}
\sum\limits_{y\in I}\left| \pi_{n}(y) -\pi(y)\right|$ 
is the total variation distance between ${\pi}_{n}$ and ${\pi}$, 
see \cite{AD} and \cite{DF}.
In  Proposition 2.10 in \cite{AD} it was proven that
every stationary time $T$ verifies
\begin{equation}
\text{sep}\left({\pi }_{n},{\pi }\right) \leq
\PP_{\pi_0}\left(T>n\right) \,\;\; n\ge 0\,.
\label{equatn34}
\end{equation}
Based upon this result the strong stationary time $T$
is called sharp when there is equality in (\ref{equatn34}), that is
\begin{equation*}
\text{sep}\left({\pi }_{n},{\pi }\right)=
\PP_{\pi_0}\left(T>n\right)\,\;\; n\ge 0\,.
\end{equation*}
In Proposition 3.2 in \cite{AD} it was shown that a sharp strong stationary 
time always exists.

\bigskip

Let $\tP$ be  a stochastic kernel on the countable set $\tI$ such that 
$\tP$ is a $\Lambda-$intertwining 
of $P$ where $\Lambda$ is a nonsingular stochastic kernel,
so $\tP \Lambda=\Lambda P$. Let $\tX=(\tX_n: n\ge 0)$ be a Markov chain 
with kernel $\tP$. 

\medskip

Recall that when we are in the framework of Theorem \ref{theo1}, 
we have $\tP\Lambda=\Lambda \vP$, so
$\tP$ is a $\Lambda-$intertwining of the reversal kernel $\vP$. 
Hence, when the intertwining is constructed from a dual relation, 
$\vP$ and the reversed chain $\vX$ will play the role of $P$ and   
$X$ in the intertwining relation. 
In the reversible case $\vP=P$ both notations coincide, 
that is $\vP=P$ and we can take $\vX=X$, this
occurs for instance when $P$ is the kernel of an irreducible BD chain.

\bigskip

The initial probability distributions of the chains $X$ and $\tX$        
will be respectively $\pi_{0}$ and ${\widetilde \pi_{0}}$,
that is $X_{0}\overset{d}{\sim}\pi _{0}$ and 
$\tX_{0}\overset{d}{\sim}\widetilde{\pi}_{0}$.
We assume that the initial distributions are linked, this means:
\begin{equation}
\label{eqno0}
\pi'_0={\widetilde \pi}'_0\Lambda.
\end{equation}
When this relation is verified we say that $\pi'_0$ and $\pi_0$ is 
an admissible condition.
Let $\pi_n$ and ${\widetilde \pi }_n$ be the distributions
of $X_{n}$ and $\tX_{n}$. By the intertwining 
relation $\tP^n\Lambda=\Lambda P^n$ for all $n\ge 1$, and the initial 
condition (\ref{eqno0}) we get 
\begin{equation*}
\pi'_{n}= {\widetilde{\pi }}'_{n}\Lambda \;\;\;\, \forall \, n\ge 0\,.
\end{equation*}

\subsection{ The coupling }
Let us recall the coupling done in \cite{DF}
between the intertwining Markov chains. Consider the kernel $\ovP$ 
defined on $I\times \tI$ by:
\begin{equation*}
\ovP\left((x,\widetilde{x}),(y,\widetilde{y})\right)=
\frac{P(x,y) \, \widetilde{P}(\widetilde{x},\widetilde{y})
\, \Lambda (\widetilde{y},y)}{(\Lambda P)(\widetilde{x},y) }
\1\left((\Lambda P)(\widetilde{x},y) >0\right)\,.
\end{equation*}
The kernel $\ovP$ is stochastic.
Let $\ovX=(\ovX_n: n\ge 0)$ be the chain taking values
in $I\times \tI$, evolving with the kernel $\ovP$ and having 
as initial distribution the vector 
$(\pi_0 , {\widetilde \pi}_0)$ where $\pi'_0={\widetilde \pi}'_0\Lambda$.
It can be checked that  $\ovX$ is a coupling of the chains $X$ and $\tX$.
Then, in the sequel we will write by $X$ and $\tX$ 
the components of $\ovX$, so  $\ovX_n=(X_n, \tX_n)$ for all $n\ge 0$. 
In the above construction it can be also checked that, 
\begin{equation}
\label{equatn35}
\Lambda \left( \widetilde{x},x\right)
=\PP\left( X_{n}=x\mid \widetilde{X}_{n}=\widetilde{x}\right)\;\;\,
\forall n\geq 0 \,.
\end{equation}
(For this equality also see \cite{cpy}). In \cite{DF} 
this coupling was characterized as the unique one that verifies 
(\ref{equatn35}) and three other properties on conditional independence. 
These properties imply that the coupling also satisfies,
\begin{equation*}
\Lambda( \widetilde{x}_n,x_n)
=\PP\left( X_{n}=x_n\mid  \widetilde{X}_{0}=\widetilde{x}_0
\cdots \widetilde{X}_{n}=\widetilde{x}_n\right)\;\;\,   
\forall \, n\geq 0\,.
\end{equation*}

In this process the original ergodic Markov chain $X$ governed
by $P$, may be viewed as a random output of the Markov process
$\tX$ governed by $\widetilde{P}=\Lambda P\Lambda ^{-1}$, 
when $\Lambda$ is non singular. 
This is a setup reminiscent of filtering
theory with $\widetilde{X}$ the hidden process and $X$ the
observable. The peculiarity of the intertwining construction is 
that the output $X$ process is itself Markov. 

\bigskip

The following concept was introduced in \cite{DF}.

\medskip

\begin{definition}
\label{def3}
The Markov chain $\tX$ will be called a strong stationary dual of 
the Markov chain $X$, if  $\tX$ has an absorbing state $\tpa$
that verifies
\begin{equation}
\label{equatn36}
\pi(x)=\PP\left( X_{n}=x\mid  \widetilde{X}_{0}=\widetilde{x}_0
\cdots \widetilde{X}_{n-1}=\widetilde{x}_{n-1}, 
\widetilde{X}_{n}=\tpa\right)\;\;\,  \forall x\in I, \, n\geq 0\,,
\end{equation}
and where $\widetilde{x}_0\cdots \widetilde{x}_{n-1}\in \tI$ satisfy
$\PP\left(\widetilde{X}_{0}=\widetilde{x}_0 \cdots 
\widetilde{X}_{n-1}=\widetilde{x}_{n-1}, \widetilde{X}_{n}=\tpa\right)>0$.
\end{definition}

In Theorem 2.4 in \cite{AD} it was shown that
when the condition (\ref{equatn36}) holds then the absorption time 
$\ptT_{\tpa}$ at $\{\tpa\}$ is a strong stationary time for $X$.
Moreover in Remark 2.8 in \cite{DF} it is built a specific dual process
$\tX$ having an absorbing state $\tpa$ and whose absorption time
$\ptT_{\tpa}$ is sharp.

\bigskip

Assume that $\tpa$ is an absorbing state for $\tX$.
From (\ref{equatn6''}) we get $\pi'=\mathbf{e}'_{\tpa} \Lambda$.
When the initial conditions are linked by relation
(\ref{eqno0}) $\pi'_0={\widetilde \pi}'_0\Lambda$,
we get that $\ptT_{\tpa}$ is a strong stationary time for $X$.
Indeed, from $\Lambda(\tpa,x)
=\PP\left( X_{n}=x\mid  \widetilde{X}_{0}=\widetilde{x}_0
\cdots \widetilde{X}_{n}=\tpa\right)$, it follows that
$\pi(x)=\PP\left( X_{n}=x\mid  \widetilde{X}_{0}=\widetilde{x}_0
\cdots \widetilde{X}_{n}=\tpa\right)$ is verified
because condition $\pi'=\mathbf{e}'_{\tpa} \Lambda$ holds.
Observe that for the Siegmund dual and monotone
kernels (that is verifying (\ref{equatn8'})) 
the absorbing state is $\tpa=N$.

\subsection { Choice of the initial conditions.}
Let $\tpa$ be an absorbing state of $\tX$.
From (\ref{eqno0}) the initial conditions of the chains must verify
$\pi'_{0}={\widetilde \pi_{0}}'\Lambda$ to be able to perform the 
duality construction and to get that the absorption time 
${\ptT}_{\tpa}$ is a strong stationary time for $X$.

\bigskip

Assume that $I=\tI$. 
Since $\Lambda$ is a stochastic matrix it has a left probability
eigenvector ${\pi}'_{\Lambda}$
satisfying $\pi'_{\Lambda}=\pi'_{\Lambda}\Lambda $. 
So, we can choose $\tX_{0}\overset{d}{\sim}
X_{0}\overset{d}{\sim } \pi_{\Lambda}$ because
(\ref{eqno0}) is satisfied (we also use $\overset{d}{\sim}$
to mean 'distributed as').
Then, when $\tX$ is initially distributed as $\pi_{\Lambda}$, 
$\ptT_{\tpa}$ is a strong stationary time for the chain $X$
starting from $\pi_{\Lambda}$.

\bigskip

If $\Lambda $ is non irreducible then ${\pi}_{\Lambda}$ could
fail to be strictly positive.
This is the case for the Siegmund kernel. 
In fact, from (\ref{eqn02}) 
it can be checked that $\mathbf{e}_0$ is the unique left eigenvector 
satisfying $\mathbf{e}_0'=\mathbf{e}_0' \Lambda$ and so 
${\pi}_{\Lambda}=\mathbf{e}_0$. Then, for the $\Lambda$-intertwining
given by (\ref{eqn02}) the initial condition 
$\widetilde{X}_{0}\overset{d}{\sim}\delta_{0}$
and $X_{0}\overset{d}{\sim }\delta_{0}$ is admissible.

\bigskip

When ${\widetilde b}\in \tI,\, b\in I$, ${\widetilde b}\neq \tpa$, 
verify $\mathbf{e}'_{\widetilde b}\Lambda =\mathbf{e}'_{b}$ then
$\widetilde{X}_{0}\overset{d}{\sim}\delta_{\widetilde b}$
and $X_{0}\overset{d}{\sim }\delta_{b}$ is an admissible
initial condition (it verifies (\ref{eqno0})). Then
$\ptT_{\tpa}$ starting from $\widetilde b$ is a strong stationary
time for $X$ starting from $b$, and it is strictly positive. In this 
case, both $\widetilde{X}_{0}$ and  $X_{0}$ start at a single point. 
We observe that the condition
$\mathbf{e}_{\widetilde b}'\Lambda=\mathbf{e}_{b}'$ is equivalent 
to the following condition on the dual function: 
$H\mathbf{e}_{\widetilde b}=c\mathbf{e}_{b}$ for some $c>0$. 
Indeed if $H$ verifies this 
condition and since $\Lambda= D_\varphi H' D_\pi$ (see (\ref{equatn5})) 
we obtain $\mathbf{e}_{\widetilde b}'\Lambda=c'\mathbf{e}_{b}'$
with $c'=c\pi(b)/\varphi(\widetilde b)$. Since $\Lambda$ is stochastic we 
get $c'=1$, and so $c=\varphi(\widetilde b)/\pi(b)$. This gives
$H\mathbf{e}_{\widetilde b}=\frac{\varphi(\widetilde b)}{\pi(b)}\mathbf{e}_{b}$,
which is exactly $\mathbf{e}_{\widetilde b}'\Lambda=\mathbf{e}_{b}'$. 

\bigskip

For the Siegmund kernel and $P$ monotone, 
$\Lambda$ is given by (\ref{eqn02}) and 
the equation (\ref{eqno0}) takes the form
$$
\pi_0(x)=\sum_{z=x}^N {\widetilde \pi}_0(z) \frac{\pi(x)}{\pi^c(z)}
\;\;\; \forall x\in I\,.
$$
So we need that $\pi_0(x)/\pi(x)$ decreases with $x\in I$
and in this case 
${\widetilde \pi}_0(x)=
{\pi^c(x)}\left(\pi_0(x)/\pi(x)-\pi_0(x+1)/\pi(x+1)\right)$.
These are, respectively, condition (4.7) and formula (4.10) in \cite{DF}.

\medskip

We recall that every monotone kernel $P$ verifies
condition  $\pi'=\mathbf{e}'_{N} \Lambda$ (see (\ref{equatn6''})). 
The $\Lambda-$intertwining $\tP$ is the one of $\vP$, and in this case 
$\vX$ and $\tX$ denote the Markov chains associated to $\vP$
and $\tP$, respectively. 

\subsection{ Conditions for sharpness }
We now give a proof of the sharpness result alluded
to in Remark $2.39$ of \cite{DF} and in Theorem 2.1 in \cite{F}.

\bigskip

\begin{proposition}
\label{prop9}
Let $X$ be an irreducible positive recurrent Markov chain, $\tX$ be a 
$\Lambda-$intertwining of $X$ having $\tpa$ as an absorbing state.
Assume that there exists $d\in I$ such that 
\begin{equation}
\label{equation37'}
\Lambda \mathbf{e}_{d}=\pi(d)\, \mathbf{e}_{\tpa}.
\end{equation}
Then $\tX$ is a sharp dual to $X$, that is
for $\tX_{0}\overset{d}{\sim }\widetilde{\pi}_{0}$ and
$X_{0}\overset{d}{\sim }\pi_{0}$
with $\pi'_{0}=\widetilde{\pi}'_{0}\Lambda$, we have: 
\begin{equation}
\label{equatn38}
\hbox{sep}(\pi_{n},\pi) =
\PP_{{\widetilde \pi}_0} (\ptT_{\tpa}> n) \; \; \forall \, n\ge 0\,.
\end{equation}
\end{proposition}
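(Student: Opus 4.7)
The plan is to prove (\ref{equatn38}) by matching upper and lower bounds. The Aldous--Diaconis estimate (\ref{equatn34}) will give the $\le$ direction once we know $\ptT_{\tpa}$ is a strong stationary time for $X$. This has already been established in the paragraph following (\ref{equatn36}), using the identity $\pi'=\mathbf{e}'_{\tpa}\Lambda$ (which is guaranteed by Proposition \ref{prop0}, since $P$ is irreducible positive recurrent and $\tpa$ is absorbing for $\tP$) together with the linked initial distributions $\pi'_0=\widetilde{\pi}'_0\Lambda$ coming from admissibility.

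For the matching lower bound, I would single out the coordinate $y=d$ in the supremum defining $\hbox{sep}(\pi_n,\pi)$. Iterating the intertwining relation from the admissible initial conditions yields $\pi'_n=\widetilde{\pi}'_n\Lambda$ for every $n\ge 0$, so
\[
\pi_n(d) \;=\; \widetilde{\pi}'_n \Lambda \mathbf{e}_d.
\]
The hypothesis (\ref{equation37'}), $\Lambda\mathbf{e}_d=\pi(d)\,\mathbf{e}_{\tpa}$, collapses this to $\pi_n(d)=\pi(d)\,\widetilde{\pi}_n(\tpa)$. Since $\tpa$ is absorbing for $\tX$, $\widetilde{\pi}_n(\tpa)=\PP_{\widetilde{\pi}_0}(\ptT_{\tpa}\le n)$, hence
\[
1-\frac{\pi_n(d)}{\pi(d)} \;=\; \PP_{\widetilde{\pi}_0}(\ptT_{\tpa}>n).
\]
Taking the supremum over $y\in I$ gives $\hbox{sep}(\pi_n,\pi)\ge \PP_{\widetilde{\pi}_0}(\ptT_{\tpa}>n)$, and combining with the Aldous--Diaconis upper bound yields the equality (\ref{equatn38}).

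The only delicate point I anticipate is purely notational rather than conceptual: reading the matrix identity (\ref{equation37'}) correctly as the entrywise statement $\Lambda(\widetilde{x},d)=\pi(d)\,\mathbf{1}(\widetilde{x}=\tpa)$, which is what allows the sum $\sum_{\widetilde{x}}\widetilde{\pi}_n(\widetilde{x})\Lambda(\widetilde{x},d)$ to reduce to a single term. Once this is in place, the remainder of the argument uses only two generic facts: iterated intertwining of the marginals, and the elementary identity $\widetilde{\pi}_n(\tpa)=\PP_{\widetilde{\pi}_0}(\ptT_{\tpa}\le n)$ for an absorbing state. Notice also that one need not invoke the coupling formula (\ref{equatn35}): the computation is at the level of the laws of $X_n$ and $\tX_n$ alone, so no joint construction of $(X,\tX)$ enters the proof.
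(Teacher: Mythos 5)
Your argument is correct, and your lower-bound step coincides with the paper's: both isolate $y=d$, use $\pi'_n=\widetilde{\pi}'_n\Lambda$ and the hypothesis (\ref{equation37'}) to get $\pi_n(d)=\pi(d)\,\widetilde{\pi}_n(\tpa)$, and conclude $\hbox{sep}(\pi_n,\pi)\ge 1-\widetilde{\pi}_n(\tpa)=\PP_{\widetilde{\pi}_0}(\ptT_{\tpa}>n)$. The divergence is in the upper bound. You invoke the Aldous--Diaconis inequality (\ref{equatn34}) applied to the strong stationary time $\ptT_{\tpa}$, whereas the paper proves the matching inequality directly: since $\Lambda(\tpa,\cdot)=\pi(\cdot)$ by Proposition \ref{prop0} and $\Lambda$ is nonnegative, one has $\pi_n(x)=\sum_{\widetilde x}\widetilde{\pi}_n(\widetilde x)\Lambda(\widetilde x,x)\ge \widetilde{\pi}_n(\tpa)\Lambda(\tpa,x)=\widetilde{\pi}_n(\tpa)\pi(x)$ for every $x$, so $\min_x \pi_n(x)/\pi(x)$ is attained at $d$ and equals $\widetilde{\pi}_n(\tpa)$. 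The paper's route is a short algebraic computation on marginal laws; it in effect re-derives the sharp case of (\ref{equatn34}) rather than quoting it, and it does not invoke the Diaconis--Fill coupling at all. This undercuts your closing remark that ``no joint construction of $(X,\tX)$ enters the proof'': the fact that $\ptT_{\tpa}$ is a strong stationary time for $X$, on which your Aldous--Diaconis upper bound rests, is exactly where the coupling (\ref{equatn35}) enters (that fact is established in the paper only via the coupled chain). So your proof is valid but less self-contained than the paper's, which dispenses with both the external inequality and the coupling and hence proves (\ref{equatn38}) purely at the level of the one-dimensional marginals.
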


\begin{proof}
From condition $\Lambda \mathbf{e}_{d}=\pi(d)\mathbf{e}_{\tpa}$
we get,
\begin{equation}
\label{equatn39}
\pi_n(d) = {\pi}'_{n}\mathbf{e}_{d}=
{\widetilde \pi}'_{n}
\Lambda \mathbf{e}_{d}=\pi(d)\widetilde{\pi}_{n}(\tpa) .
\end{equation}
Since $\pi>0$, the last equalities imply that
\begin{equation}
\label{equatn40}
\pi_n(d)>0 \, \Leftrightarrow \, \widetilde{\pi}_{n}(\tpa)>0\,.
\end{equation}
On the other hand the condition $\pi'=\mathbf{e}'_{\tpa} \Lambda$
means that the $\tpa-$row of $\Lambda$ verifies
$\Lambda(\tpa,\cdot) =\pi'(\cdot)>0$. Then, if for some $n$ we have
$\widetilde{\pi}_{n}(\tpa)>0$, from $\pi'_n={\widetilde \pi}'_{n}\Lambda$
we deduce ${\pi}_{n}>0$. Moreover,
$$
\pi_n(x)=\sum\limits_{{\widetilde x}\in \tI}
{\widetilde \pi}({\widetilde x})\Lambda({\widetilde x}, x)\ge
{\widetilde \pi}(\tpa) \Lambda(\tpa, x)={\widetilde \pi}(\tpa)\pi(x)
$$
Therefore, from (\ref{equatn39}) we get
$$
\min\limits_{x\in I}\frac{\pi_{n}(x)}{\pi(x)}=
\widetilde{\pi}(\tpa)=\frac{\pi_{n}(d)}{\pi(d)}\,.
$$
Then, sep$(\pi_n,\pi)=1-{\widetilde \pi}(\tpa)$. Since $\tpa$ is an 
absorption
state implies  ${\widetilde \pi}_n(\tpa)=
\PP_{{\widetilde \pi}_0}(\ptT_{\tpa}\le n)$, we get the desired
relation
$$
\hbox{sep}(\pi_n,\pi)\!=\!\PP_{{\widetilde \pi}_0}(\ptT_{\tpa}> n)
\;\; \forall \, n\ge n_+, \hbox{ with }
n_{+}\!=\!\inf\{n\ge 0: {\widetilde \pi}_n(\tpa)>0\}\,.
$$
Let us show that the relation (\ref{equatn38}) holds for $n<n_+$.
First remark that in this case ${\widetilde \pi}_n(\tpa)=0$,
which by (\ref{equatn40}) implies $\pi_n(d)=0$. Then
sep$(\pi_n,\pi)=1$ and so the equality
sep$(\pi_n,\pi)=\PP_{{\widetilde \pi}_0}(\ptT_{\tpa}> n)=1$ holds.
We have proven that $\tX$ is a sharp dual to $X$.
\end{proof}

\begin{proposition}
\label{prop9'}
\noindent $(i)$ Assume the hypotheses of Theorem \ref{theo1} are verified
and that $\hP$ is a substochastic kernel having $\widehat a$ as an absorbing 
state in $\hP$. Then, if there exists some $d\in I$ such that
\begin{equation}
\label{equatn40'}
\mathbf{e}'_d H=c \, \mathbf{e}'_{\widehat a} \, \hbox{ for some } c>0\,,
\end{equation}
then $\widehat a$ is an absorbing state for 
$\tX$ and $\tX$ is a sharp dual to $\vX$. That is,
when $\pi'_{0}=\widetilde{\pi}'_{0}\Lambda$ the relation
(\ref{equatn38}) is verified.

\medskip

\noindent $(ii)$ Assume the hypotheses of Theorem \ref{theo1} are verified
and that $\hP$ is a substochastic kernel verifying that there exist
${\widehat a}\in \hI$, $d\in I$ 
such that for some constants $c'>0$, $c>0$ we have   
\begin{equation}
\label{equatn41'}
H\mathbf{e}_{\widehat a}=c' \1 \, \hbox{ and } \,
\mathbf{e}'_d H=c \mathbf{e}'_{\widehat a}
\end{equation}
Then part $(i)$ holds, and $\tX$ is a sharp dual to $\vX$.
\end{proposition}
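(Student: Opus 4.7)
The strategy is to reduce both parts to Proposition~\ref{prop9} by verifying its hypothesis $\Lambda \mathbf{e}_d = \pi(d)\, \mathbf{e}_{\tpa}$ with $\tpa = \widehat a$.

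For part $(i)$, since $\widehat a$ is an absorbing state of $\hP$, Theorem~\ref{theo1}$(v)$ gives that $\widehat a$ is also an absorbing state of $\tP$. Applying Proposition~\ref{prop0} to the intertwining $\tP \Lambda = \Lambda \vP$ with $\widetilde a = \widehat a$ yields $\pi' = \mathbf{e}'_{\widehat a} \Lambda$. Reading component $y$ using the explicit form $\Lambda = D_\varphi^{-1} H' D_\pi$ from Theorem~\ref{theo1}$(iii)$ gives $\pi(y) H(y, \widehat a)/\varphi(\widehat a) = \pi(y)$, and since $\pi > 0$ this forces
\[
H(y, \widehat a) = \varphi(\widehat a) \qquad \forall\, y\in I.
\]
In particular $H(d, \widehat a) = \varphi(\widehat a)$, and the assumption $\mathbf{e}'_d H = c\, \mathbf{e}'_{\widehat a}$, which is the same as $H(d, y) = c\, \delta_{y, \widehat a}$, then forces $c = \varphi(\widehat a)$.

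With this identification, a direct computation gives
\[
\Lambda \mathbf{e}_d = D_\varphi^{-1} H' D_\pi \mathbf{e}_d = \pi(d)\, D_\varphi^{-1} H' \mathbf{e}_d = \pi(d)\, D_\varphi^{-1}\bigl(\varphi(\widehat a)\, \mathbf{e}_{\widehat a}\bigr) = \pi(d)\, \mathbf{e}_{\widehat a},
\]
since $(H' \mathbf{e}_d)(x) = H(d,x) = \varphi(\widehat a)\,\delta_{x,\widehat a}$. This is exactly hypothesis~(\ref{equation37'}) of Proposition~\ref{prop9} with $\tpa = \widehat a$; since by Theorem~\ref{theo1}$(iii)$ the chain $\tX$ is a $\Lambda$-intertwining of $\vX$, invoking Proposition~\ref{prop9} delivers the sharpness relation~(\ref{equatn38}) for all $n \ge 0$.

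For part $(ii)$, the additional assumption $H \mathbf{e}_{\widehat a} = c' \1$ is precisely the hypothesis of Proposition~\ref{propo1}, whose conclusion $(i)$ guarantees that $\widehat a$ is an absorbing state of $\hP$. This supplies the one remaining ingredient needed to invoke part $(i)$, and the conclusion follows. The main obstacle is the identification $c = \varphi(\widehat a)$ via Proposition~\ref{prop0}; once this is in place everything else is bookkeeping with the formulas of Theorem~\ref{theo1}.
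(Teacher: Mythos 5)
Your proof is correct and follows essentially the same route as the paper: reduce to Proposition~\ref{prop9} by checking $\Lambda\mathbf{e}_d=\pi(d)\mathbf{e}_{\widehat a}$, using Theorem~\ref{theo1}$(v)$ to make $\widehat a$ absorbing for $\tP$ and the identity $\pi'=\mathbf{e}'_{\widehat a}\Lambda$ together with the explicit form of $\Lambda$ to pin down the constant. The only cosmetic difference is that the paper reads $\pi'=\mathbf{e}'_{\widehat a}\Lambda$ at the single index $d$ to fix the scalar directly, whereas you first derive that the entire $\widehat a$-column of $H$ equals $\varphi(\widehat a)$ and then extract $c=\varphi(\widehat a)$; both are equivalent bookkeeping.
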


\begin{proof}
$(i)$ From Theorem \ref{theo1} $(v)$ it follows that $\widehat a$
is an absorbing state for $\tP$. From Proposition \ref{prop9} it
suffices to show that $d$ verifies (\ref{equation37'}): 
$\Lambda \mathbf{e}_{d}=\pi(d)\, \mathbf{e}_{\widehat a}$.
Since the hypothesis is $H(d,y)=c\delta_{y,\widehat a}$ for some $c>0$ and 
for all $y\in \tI$, the Remark \ref{rem3} implies 
$\Lambda(x,d)=c''\delta_{x,\widehat a}$ for some $c''>0$.
Now, from Theorem \ref{theo1} $(v)$ and (\ref{equatn6''}) we have
$\pi(d)=\Lambda(\widehat a,d)$, and we deduce $c''=\pi(d)$. Therefore
$\Lambda(x,d)=\pi(d)\delta_{x,\widehat a}$ which is equivalent to 
(\ref{equation37'}).

\medskip

\noindent $(ii)$ From Proposition \ref{propo1} we get that 
the first relation in (\ref{equatn41'})
guarantees that $\widehat a$ is an absorbing state for $\hP$. 
So, we are under the hypotheses of part $(i)$ and the result follows.
\end{proof}

\begin{corollary}
\label{coro5}
\noindent $(i)$ For a monotone irreducible stochastic 
kernel $P$, the $\Lambda-$intertwining Markov chain 
$\tX$ has $N$ as absorbing state and it is a sharp dual of $\vX$. 
Moreover, both chains $\vX$ and $\tX$ can start at state $0$. 

\medskip

\noindent $(ii)$ For 
a monotone irreducible stochastic BD kernel $P$ we have
that the BD chain $\tX$ is a sharp dual to $X$.
\end{corollary}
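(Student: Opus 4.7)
The plan is to apply Proposition~\ref{prop9'}(ii) with $H = H_S$ the Siegmund dual and the single choice $\widehat a = d = N$, exploiting the rigid triangular structure of $H_S$. First I would check the two hypotheses in~(\ref{equatn41'}): since $H_S(x,y) = \1(x \le y)$, the last column is $H_S(\cdot, N) \equiv 1$, so $H_S \mathbf{e}_N = \1$ (that is $c' = 1$); and the last row is $H_S(N, y) = \delta_{y,N}$, so $\mathbf{e}'_N H_S = \mathbf{e}'_N$ (that is $c = 1$, $\widehat a = N$). The ambient hypotheses of Theorem~\ref{theo1} are in force because $P$ is irreducible (hence positive recurrent, $I$ being finite) and $H_S \hP' = P H_S$ with $\hP \ge 0$ thanks to monotonicity, see~(\ref{equatn8'}); Corollary~\ref{coro21} confirms that $N$ is absorbing for $\hP$, and Theorem~\ref{theo1}(v) that $N$ is absorbing for $\tP$ as well. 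Proposition~\ref{prop9'}(ii) then directly yields the sharpness identity~(\ref{equatn38}), proving that $\tX$ is a sharp dual of $\vX$.

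For the assertion that both chains may be started at state $0$, I would check admissibility of the initial pair $(\pi_0, \widetilde\pi_0) = (\delta_0, \delta_0)$, namely the compatibility $\pi'_0 = \widetilde\pi'_0 \Lambda$. From the explicit formula~(\ref{eqn02}) for $\Lambda$, the row $x=0$ satisfies $\Lambda(0,y) = \1(0 \ge y)\pi(y)/\pi^c(0) = \delta_{y,0}$ (using $\pi^c(0) = \pi(0)$), so $\mathbf{e}'_0 \Lambda = \mathbf{e}'_0$, whence admissibility is immediate.

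Part~(ii) is then an immediate corollary: every irreducible BD kernel is reversible with respect to its stationary distribution (as recalled just after~(\ref{equatn13})), so $\vP = P$ and $\vX$ is identified in law with $X$; part~(i) then asserts exactly that $\tX$ is a sharp dual of $X$. There is no serious obstacle in the argument: the one observation that drives everything is that $H_S$ simultaneously has its last column equal to $\1$ and its last row equal to $\mathbf{e}'_N$, so that both structural conditions of Proposition~\ref{prop9'}(ii) are fulfilled at the single point $\widehat a = d = N$, and the triangularity of $\Lambda$ makes the admissible zero-initial-condition automatic.
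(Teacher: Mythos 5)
Your proof is correct and takes essentially the same route as the paper, with a minor re-routing: you invoke Proposition~\ref{prop9'}(ii) by verifying both conditions of~(\ref{equatn41'}) at $\widehat a = d = N$, whereas the paper's proof goes through Proposition~\ref{prop9'}(i), citing~(\ref{equatn40'}) together with the already-established fact (Corollary~\ref{coro21}(iii) and relation~(\ref{equatn15})) that $N$ is absorbing and $\Lambda\mathbf{e}_N = \pi(N)\mathbf{e}_N$. Both versions rest on exactly the same two structural features of $H_S$ (constant last column, unit last row) and the admissibility of $\delta_0$ via $\mathbf{e}'_0 = \mathbf{e}'_0\Lambda$, so the arguments are interchangeable.
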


\begin{proof} 
For part $(i)$, the properties required for sharpness for the 
Siegmund intertwining of BD chains follow straightforward 
because the $N-$th row of $H_S$ verifies (\ref{equatn40'})
with $d=N$. 
Also the relation (\ref{equatn15}) in Corollary \ref{coro21}
is exactly (\ref{equation37'}). The fact that
the state $0$ is admissible for both $X$ and $\tX$ is
a consequence of $\mathbf{e}_{0}'=\mathbf{e}_{0}'\Lambda$.
In part $(ii)$ the only novelty is that for BD chains $P=\vP$.
\end{proof}

We note that, by definition, for an absorbing point $\widehat a$  
there is a unique state $d$ verifying (\ref{equatn40'}),
as it occurs for the Siegmund kernel. 

\medskip

When  $d$ verifies the property (\ref{equatn40})   
we say that $d$ is a witness state in $X$ that $\tX$ hits $\tpa$.
It reflects the following more general situation.
Assume that $\Lambda$ fulfills $\Lambda(x,y)>0 \Leftrightarrow x\ge y$. 
Then, from $\pi'={\widetilde \pi}'\Lambda$ we get
$$
{\widetilde \pi}_0(x)>0 \, \Rightarrow \pi_0(y)>0 \, \;\;\;
\forall \, y\le x.
$$ 
Then if $N$ is an absorbing state of $\tP$ and  
$P(y,y+1)>0$ and $\tP(y,y+1)>0$ for all $y\in \{0,\cdots,N-1\}$
the equivalence $\pi_n(N)>0\Leftrightarrow {\widetilde \pi}_n(N)>0$
is satisfied, and so $N$ will be a witness state in $X$ that $\tX$ hits 
the state $N$.

\subsection{ Times to absorption}
\label{subsec5.4}
From Proposition \ref{prop9'}, 
for the BD chains the random time $\ptT_{N}$ starting from 
the state $0$
gives information on the speed of convergence
to its invariant measure, of the original BD chain $X$ 
starting from the state $0$. In the sequel we 
denote by $\ptT_{N;0}$ a random variable distributed as
the hitting time $\ptT_N$ when starting from $0$, that is
$\PP(\ptT_{N;0}=n)=\PP_0(\ptT_N=n)$
for $n\ge 1$. We denote its variance by 
$\hbox{Var}(\ptT_{N;0})$. 
 
\bigskip

For BD chains absorbed at $N$, the probability generating
function of $\ptT_{N}$ starting from $0$ is, see \cite{K} and \cite{F},
\begin{equation}
\EE\left(u^{\ptT_{N;0}}\right) =\prod_{k=1}^{N}
\frac{( 1-t_{k}) u}{1-t_{k}u}\text{, }u\in [0,1],
\label{equatn41}
\end{equation}
where $-1<t_{k}<+1$, $k=1,...,N$ are the $N$ distinct eigenvalues of both
$\tP$ and $P$, avoiding $t_{0}=1.$ The formula (\ref{equatn41}) also
reads
\begin{equation*}
\PP\left(\ptT_{N;0}>n\right) =\sum_{l=1}^{N}
\prod_{k\neq l} \frac{1-t_{k}}{t_{l}-t_{k}}t_{l}^{n}\,, \;\; n\geq N-1.
\end{equation*}
Then, $t_{1}^{-n}\PP(\ptT_{N;0}>n) \rightarrow
\prod_{k=2}^{N}\frac{1-t_{k}}{t_{1}-t_{k}}$ 
as $t\to \infty$, and
$\ptT_{N;0}$ has geometric tails with exponent $t_{1}$.
Also, 
\begin{equation*}
\EE(\ptT_{N;0}) = \sum_{k=1}^{N}(1-t_{k})^{-1}\text{ and }
\hbox{Var}(\ptT_{N;0})= \sum_{k=1}^{N}(1-t_{k})^{-2}
-\sum_{k=1}^{N}(1-t_{k})^{-1}\,.
\end{equation*}
Since $t_{1}$ is the dominant eigenvalue
\begin{equation}
\hbox{Var}\left(\ptT_{N;0}\right) \leq
{\EE(\ptT_{N;0})}/{1-t_{1}}.  
\label{equatn42}
\end{equation}
When the eigenvalues $t_{k}$ are non negative, then
$\ptT_{N;0}\overset{d}{\sim}\sum_{k=1}^{N}\tau _{k}$
where the $\tau _{k}$s are
independent and $\tau _{k}\overset{d}{\sim }$ Geometric$(1-t_{k})$,
the geometric distribution with success parameter $1-t_{k}$ on 
$\{1,2,\cdots\}$. Assume that the eigenvalues $t_{k}$ are not all 
positive and put $t_{N}<...<t_{l+1}<0\leq t_{l}<...<t_{1}<t_{0}=1$.
Then (\ref{equatn41}) interprets as:
\begin{equation*}
\ptT_{N;0}-\sum_{k=l+1}^{N}B_{k}\overset{d}{\sim}
\sum_{k=1}^{l}\tau _{k},
\end{equation*}
where $B_{k}\overset{d}{\sim }$ Bernoulli$( 1/\left( 1-t_{k})
\right) ,$ $\tau _{k}\overset{d}{\sim }$ Geometric$( 1-t_{k}) $ and
$\ptT_{N;0}$ are all mutually independent. 
All the previous results in this Subsection \ref{subsec5.4}
can be found in \cite{AD}, \cite{DF} and \cite{F}.

\bigskip

When $t_{k}$ are known explicitly it is possible
to compute $\EE(\ptT_{N;0}) $ and 
Var$(\ptT_{N;0})$. So, we can search for conditions 
under which
\begin{equation*}
\EE(\ptT_{N;0}) \rightarrow \infty \hbox{ and }
{\hbox{Var}(\ptT_{N;0})}/
{\left(\EE(\ptT_{N;0})^2\right)}
\rightarrow 0\text{ as }N\rightarrow \infty .
\end{equation*}
If this is the case, 
$\ptT_{N;0}/{\EE(\ptT_{N;0})}\rightarrow 1$ as $N\to \infty$
in probability, and
$\left\lfloor \EE(\ptT_{N;0}) \right\rfloor$
is a cutoff time for $X$ started at $0$.
In this goal, from (\ref{equatn42}) we get
Var$\left({\ptT_{N;0}}/{\EE(\ptT_{N;0})}\right)
\leq 1/\left((1-t_{1}) \EE(\ptT_{N;0})\right)$.
Then, $(1-t_{1}) \EE(\ptT_{N;0})\rightarrow \infty$ as $N\to \infty$
is a sufficient condition for
Var$\left(\ptT_{N;0}/\EE(\ptT_{N;0})\right)
\rightarrow 0$.
See \cite{DSC} for recent developments and precisions.

\bigskip

\emph{Example:} Consider the Moran model with mutations, and put 
$a:=a_{1}+a_{2}$, $\overline{a}:=1-a$. From (\ref{equatn21}) the 
eigenvalues $t_{k}$ verify: 
$1-t_{k}=\frac{k}{N}\left(a +
\overline{a}\frac{k-1}{N}\right)$. Using the approximation
\begin{equation*}
\EE(\ptT_{N;0}){\sim} N\int_{0}^{1}\frac{dx}{\left( 
x+1/N\right)
\left(a +\overline{a }x\right) }=
\frac{N^{2}}{Na -\overline{a}}\left(\int_{0}^{1}\frac{dx}{x+1/N}-
\overline{a}\int_{0}^{1}\frac{dx}{a +\overline{a}x}\right) ,
\end{equation*}
we get
\begin{equation*}
\EE(\ptT_{N;0})\sim {N\left(\log N+\log a\right)}/a 
\text{ and }\hbox{Var}(\ptT_{N;0}) \sim \left({N}/{a}\right)^{2}
\end{equation*}
showing that Var$\left(\ptT_{N;0}/
\EE(\ptT_{N;0}) \right) \sim (\log N)^{-2}
\rightarrow 0$. 
The expected mixing time is $\EE(\ptT_{N;0})\sim N\log N/a$ 
whereas the spectral gap is $1-t_{1}=a/N$. $\Box$

\bigskip

In general, the values $t_{k}$ are not known. So it would be helpful
to compute differently the mean and the variance of the absorption time 
$\ptT_{N;0}$. This is the goal of our next
paragraph in the BD chain context.

\bigskip

\noindent {\it The mean and the variance of the absorption time}. Let 
us compute $\EE(\ptT_{N;0})$ and $\hbox{Var}(\ptT_{N;0})$ by
the usual methods. We introduce the following sequences of independent 
random variables: 
$$
(S_y: y=0,\cdots, N-1) \hbox{ with distribution } 
\PP(S_y=n)=\PP_y(\ptT_{y+1}=n) \;\;\, \forall \, n \ge 0\,,
$$ 
so $S_y$ is a copy of  
the time spent in hitting $y+1$ when $\tX$ starts from $y$.
We also assume that the sequence $(S_y: y=0,\cdots, N-1)$ 
is independent of the Markov chain $\tX$. Observe that 
$$
\PP(\sum_{y=0}^{N-1} S_y=n)=\PP(\ptT_{N;0}=n)  \;\;\,
\forall \, n \ge 0\,.
$$ 

When the initial condition is $\tX_0=y$, we have the representation
\begin{equation}
\label{equatn42}
S_y\overset{d}{\sim}\1(\tX=y+1)+\1(\tX=y1)
(1+S'_y)+\1(\tX=y-1)(1+S_{y-1}+S''_y)\,,
\end{equation}
where $S'_y$ and $S''_y$ are independent copies of $S_y$, which are 
independent from $\tX$ and from the whole sequence 
$(S_y: y=0,\cdots, N-1)$.
By taking expected values we find the recurrence relation
\begin{equation}
\label{equatn43}
\EE(S_y)=\frac{1}{\widetilde{p}_y}+
\frac{\widetilde{q}_y}{\widetilde{p}_y}\EE(S_{y-1})\,.
\end{equation}
Since $\EE(S_0)=1/{\widetilde{p}_0}$ we get by iteration,
\begin{equation}
\label{equatn44}
\EE(S_y)=\sum_{l=0}^y \frac{1}{\widetilde{p}_y} 
\prod_{r=l+1}^y\frac{\widetilde{q}_r}{\widetilde{p}_r}
\end{equation}
and so the mean of the absorption time at $N$ starting from $0$ is,
$$
\EE(\ptT_{N;0})=\sum_{y=0}^{N-1}
\left(\sum_{l=0}^y \frac{1}{\widetilde{p}_y}
\prod_{r=l+1}^y\frac{\widetilde{q}_r}{\widetilde{p}_r}\right)\,.
$$
Also from (\ref{equatn42}) we obtain
\begin{eqnarray*}
S_y^2&\overset{d}{\sim}&\1(\tX=y+1)+\1(\tX=y)(1+2S'_y+{S'}_y^2)\\
&{}&\, +\1(\tX=y-1)(1+S_{y-1}^2+{S''}_y^2+2S_{y-1}+2S''_y+2S_{y-1}2S''_y)\,.
\end{eqnarray*}
Therefore 
\begin{eqnarray*}
\EE(S_y^2)\!=\!\frac{1}{\widetilde{p}_y}+\frac{2\widetilde{r}_y} 
{\widetilde{p}_y}\EE(S_y)\!+\!
\frac{\widetilde{q}_y}{\widetilde{p}_y} \EE(S_{y-1}^2)
\!+\!\frac{2\widetilde{q}_y}{\widetilde{p}_y}
\left(\!\EE(S_{y-1})\!+\! \EE(S_y)\!+\!\EE(S_{y-1})\EE(S_y)\!\right)\,.
\end{eqnarray*}
From (\ref{equatn43}) we find that
Var$(S_y)=\EE(S_y^2)-\EE(S_y)^2$ verifies
\begin{eqnarray*}
\hbox{Var}(S_y)&=& 
\frac{1}{\widetilde{p}_y}\!+\!\frac{2\widetilde{r}_y} 
{\widetilde{p}_y}\EE(S_y)\!+\!
\frac{\widetilde{q}_y}{\widetilde{p}_y}\EE(S_{y-1}^2\!)
\!+\!\frac{2\widetilde{q}_y}{\widetilde{p}_y}
\left(\EE(S_{y-1}\!)\!+\! \EE(S_y) \!+\!\EE(S_{y-1}\!) \EE(S_y)\right)\\
&{}&\,-\frac{1}{\widetilde{p}_y^2}-
\frac{\widetilde{q}_y^2}{\widetilde{p}_y^2}\EE(S_{y-1})^2
-\frac{2\widetilde{q}_y}{\widetilde{p}_y^2}\EE(S_{y-1})
\,.
\end{eqnarray*}
Therefore
\begin{equation}
\label{equatn45}
\hbox{Var}(S_y)=\frac{\widetilde{q}_y}{\widetilde{p}_y}\hbox{Var}(S_{y-1})
+A_y\,,
\end{equation}
where
\begin{eqnarray*}
A_y&=&
\frac{\widetilde{p}_y-1}{\widetilde{p}_y^2}+\frac{2(1-\widetilde{p}_y)}
{\widetilde{p}_y}\EE(S_y)+
\frac{2\widetilde{q}_y(\widetilde{p}_y-1)}{\widetilde{p}_y^2}\EE(S_{y-1})
+\frac{2\widetilde{q}_y}{\widetilde{p}_y}
\EE(S_{y-1}) \EE(S_y)\\
&{}&\,- \frac{\widetilde{q}_y(\widetilde{q}_y-\widetilde{p}_y)}
{\widetilde{p}_y^2}\EE(S_{y-1})^2
\,.
\end{eqnarray*}
Observe that from (\ref{equatn44}) the coefficient $A_y$ can be 
computed
in terms of the parameters of the BD chain $\tX$.
In particular 
$A_0=\hbox{Var}(S_0)={(1-\widetilde{p}_0)}/{\widetilde{p}_0^2}$.
From the recurrence formula (\ref{equatn45}) and the value for
Var$(S_0)$, we find the explicit expression,
\begin{eqnarray*}
\hbox{Var}(S_y)=\sum_{l=0}^y
A_l\, \prod_{s=l+1}^y \frac{\widetilde{q}_s}{\widetilde{p}_s}\,.
\end{eqnarray*}
Therefore, by using independence, the variance of the 
hitting time of $N$ starting from $0$ is,
\begin{equation}
\label{equatn46}
\hbox{Var}(\ptT_{N;0})=\sum_{y=0}^{N-1}\hbox{Var}(S_y)
=\sum_{y=0}^{N-1}\left(\sum_{l=0}^y
A_l\, \prod_{s=l+1}^y \frac{\widetilde{q}_s}{\widetilde{p}_s}\right)\,,
\end{equation}
which can be explicitly computed simply 
in terms of the transition parameters of the BD chain
$\tX$.

\medskip

\begin{remark} 
Even if the expressions of the mean and the variance in 
(\ref{equatn44})  
and (\ref{equatn46}) do not require the knowledge of the spectrum, they
are difficult to handle in terms of the parameters, so  
in general we are not able to use them to describe the behavior 
of the mean and the variance when $N$ is large. 
\end{remark}

\section{ The Hypergeometric dual }
For $I=\{0,\cdots,N\}$ let us suggest other potentially interesting examples 
of nonsingular duality kernels $H$ for which there exists a column
of $H$ which is constant so that Proposition \ref{propo1} can be
applied. For these examples, $H^{-1}$ is known explicitly
which turns out to be useful to decide whether for a given
irreducible stochastic kernel the $H-$dual defines a
substochastic matrix. If this occurs, the problem of interpreting the
intertwining chain given by Theorem \ref{theo1},
remains a challenging problem for each specific case.

The Vandermonde dual and the hypergeometric duals that were first 
introduced in \cite{MM} in
the context of neutral population genetics. 
In this context and also in nonneutral situations, 
the hypergeometric kernel plays a central role.

\medskip

\noindent $\cdot \;$  \textbf{Vandermonde.}
$H( x,y)=(x/N)^{y}$. In this case the column $0-$th is constant.

\medskip

\noindent  $\cdot \;$ \textbf{Hypergeometric.} $H(x,y)
=\binom{N-x}{y}/\binom{N}{y}$. In this case $H=H'$, 
$H$ is upper-left triangular,
and (\ref{equatn41'}) is verified with $\widehat a=0$
and $d=N$,
\begin{equation}
\label{equatn42'}
H\mathbf{e}_{0}=\1 \, \hbox{ and } \,
\mathbf{e}'_N H=\mathbf{e}'_{0}\,.
\end{equation}
Let us comment on this choice of $H$.

\medskip

\noindent When $P$ is given by the reversible Moran
model with completely monotone non-neutrality bias mechanism,
the $H-$dual kernel $\hP$ can be interpreted
in terms of a multi-sex backward process akin to the coalescent.
As shown in \cite{HM}, for the Moran model with bias $p$ 
satisfying $p(0)\in (0,1)$ we have:
$\hP \1(0)=1$ and $0<\hP \1(x)=1-\frac{x}{N}p(0) <1$ for all $x\neq 0$. 
From $p(0) \neq 0$, all the states but $a=0$ of
$\hP$ are mass-defective. The intertwining matrix $\tP$ is 
the transition kernel of a skip-free to the 
left BD chain that can easily be obtained from \cite{HM}, and
$0$ is the unique absorbing state for $\tX$. 
The relation (\ref{equatn42'}) fulfills the hypotheses of  
Proposition \ref{prop9'} with $d=N$,
then in the above Moran model the sharpness property
is satisfied.

\medskip

On the other hand the link matrix $\Lambda$ is upper-left triangular,  
stochastic and irreducible. Then, there exists a probability 
vector $\pi_\Lambda$ that verifies $\pi'_\Lambda \Lambda=\pi'_\Lambda$, 
so $\pi_0={\widetilde \pi}_0= \pi_\Lambda$ is an admissible 
initial condition for $X$ and $\tX$. Also, from
$\mathbf{e}'_N \Lambda=\mathbf{e}'_0$ we get that another admissible 
initial condition is $\pi_0=\delta_0$ and ${\widetilde \pi}_0=\delta_N$.
We can summarize this discussion in the following result.

\begin{corollary}
\label{coro3'}
Let $X$ be the Moran chain with transition matrix $P$ fulfilling the above
monotonicity conditions on $p$. Then, the construction of 
the intertwining kernel $\tP$
in Theorem \ref{theo1} starting from the hypergeometric dual $H$ can
be done and the Markov chain $\tX$ is well-defined.
The absorbing state of $\tX$ is $0$,
the process $\tX$ is a sharp dual of $X$ and $\tX$
can be started at $N$ while $X$ starts at $0$.
\end{corollary}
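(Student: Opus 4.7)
The plan is to reduce the statement to successive applications of Theorem \ref{theo1}, Proposition \ref{propo1}, and Proposition \ref{prop9'}, once the hypergeometric $H$-dual has been checked to produce a valid non-negative substochastic kernel $\hP$. First I would verify the hypotheses of Theorem \ref{theo1}: under the stated complete-monotonicity condition on $p$ with $p(0)\in(0,1)$, the finite Moran chain $P$ is irreducible and hence positive recurrent; the nontrivial ingredient is that $\hP$ defined through $H\hP'=PH$ is non-negative and substochastic, which is exactly the content of the result from \cite{HM} quoted just before the corollary: $\hP\1(0)=1$, $\hP\1(x)=1-(x/N)p(0)\in[0,1)$ for $x\neq 0$, and $\hP\ge 0$. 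With this in hand, Theorem \ref{theo1} produces a harmonic $\varphi>0$, a stochastic link $\Lambda$, and a stochastic intertwining kernel $\tP=D_\varphi^{-1}\hP D_\varphi$, so $\tX$ is well-defined.

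Next I would identify the absorbing state of $\tX$. The first identity in (\ref{equatn42'}), $H\mathbf{e}_0=\1$, says that the $0$-th column of $H$ is the constant vector $\1$, so Proposition \ref{propo1}(i) applies with $\widehat a=0$ and yields that $0$ is absorbing for $\hP$; Theorem \ref{theo1}(v) then transfers this absorbing property to $\tP$. Since \cite{HM} ensures that $\{0\}$ is the unique stochastic class, Theorem \ref{theo1}(iv4) also applies and interprets $\tX$ as a Doob-transformed conditioned chain if one so wishes.

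For the sharpness claim I would invoke Proposition \ref{prop9'}(ii) with $\widehat a=0$ and $d=N$: the two required conditions $H\mathbf{e}_{0}=\1$ and $\mathbf{e}'_N H=\mathbf{e}'_0$ are precisely (\ref{equatn42'}). The proposition yields that $\tX$ is a sharp dual of $\vX$. Because the Moran chain is a BD chain, it is reversible with respect to $\pi$, so $\vP=P$ and sharpness transfers to $X$ itself.

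Finally, the admissibility of the initial data $\pi_0=\delta_0$, $\widetilde\pi_0=\delta_N$ amounts to the link relation $\mathbf{e}_0'=\mathbf{e}_N'\Lambda$ from (\ref{eqno0}). Using Remark \ref{rem3} together with the explicit value $H(y,N)=\binom{N-y}{N}=\delta_{y,0}$ and the one-line computation $\varphi(N)=\sum_x H(x,N)\pi(x)=\pi(0)$, one obtains $(\mathbf{e}_N'\Lambda)(y)=H(y,N)\pi(y)/\varphi(N)=\delta_{y,0}$, as required. The main obstacle is not in any of these invocations but lies at the very first step: the nontrivial non-negativity and mass-defect structure of $\hP$ for the nonneutral Moran model under a completely monotone bias, which must be imported from \cite{HM} as a black box.
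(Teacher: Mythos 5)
Your proof is correct and follows essentially the same route as the paper: import from \cite{HM} the non-negativity/substochasticity and mass-defect structure of the hypergeometric $H$-dual $\hP$, apply Theorem \ref{theo1} to build $\varphi$, $\Lambda$, $\tP$, use Proposition \ref{propo1}(i) with $H\mathbf{e}_0=\1$ to identify $0$ as absorbing, invoke Proposition \ref{prop9'}(ii) with $(\widehat a,d)=(0,N)$ via (\ref{equatn42'}) for sharpness, and verify $\mathbf{e}_N'\Lambda=\mathbf{e}_0'$ for the admissibility of the initial data. Your explicit final computation using Remark \ref{rem3} together with $H(\cdot,N)=\mathbf{e}_0$ and $\varphi(N)=\pi(0)$ makes the admissibility step slightly more detailed than the paper's, but it is the same argument.
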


Hence, the time $\ptT_{0;N}$ that $\tX$ reaches $0$ when it starts from 
$N$, is the stochastically smallest
time at which $X_{\ptT_{0;N}}\overset{d}{\sim } \pi$
given $X_{0}=0$ and $\widetilde{X}_{0}=N$. 
We point out that the time $\ptT_{0;N}$ that $\tX$ reaches $0$ when  
it starts from $N$,
is distributed like the time $\ptT_{N;0}$
to reach $N$ starting from $0$ of the Siegmund intertwining BD chain to 
the Moran model, namely like (\ref{equatn41}). This is in accordance with 
Theorem $1.2$ of \cite{F2}, stating that for a skip-free to the right 
Markov chain
absorbed at $N$, the law of the time it takes to hit $N$ starting from $0$
is given by (\ref{equatn41}). This result can be transferred to our
skip-free to the left BD chain case, while exchanging the boundaries
$\{0,N\}$. 

\smallskip

\noindent Let us finally consider the Wright-Fisher transition matrix 
$P$ given by
\begin{equation*}
P\left( x,y\right) =\binom{N}{y}p\left( \frac{x}{N}\right) ^{y}\left(
1-p\left( \frac{x}{N}\right) \right) ^{N-y}.
\end{equation*}
whose bias $p(u)$ is again a completely monotone function,
satisfying $p(0)>0$. This process is not reversible, nor is it in the BD class.
However, using the hypergeometric duality kernel
it was shown in \cite{H} that the $H-$dual $\widehat{P}$
to $P$ in (\ref{equatn2}) defines a substochastic matrix.
From Theorem \ref{theo1} we conclude
that the corresponding $\widetilde{P}$ is $\Lambda -$linked to
$P$. $\Box$.

\section*{Acknowledgements}
The authors acknowledge the partial support given by the
BASAL-CMM project. S. Mart\'{\i}nez thanks
Guggenheim Fellowship and the hospitality of {Laboratoire de
Physique Th\'eorique et Mod\'elisation at Universit\'e de
Cergy-Pontoise,

\end{document}